\newtheorem{theorem}{Theorem}
\newtheorem{proposition}{Proposition}
\newtheorem{lemma}{Lemma}
\newtheorem{corollary}{Corollary}
\theoremstyle{definition}
\newtheorem{definition}{Definition}
\theoremstyle{remark}
\newtheorem{remark}{Remark}
\numberwithin{equation}{section}
\newcommand{\field}[1]{\ensuremath{\mathbb{#1}}}
\newcommand{\CC}{\field{C}}
\newcommand{\HH}{\field{H}}
\newcommand{\PP}{\field{P}}
\newcommand{\RR}{\field{R}}
\newcommand{\ZZ}{\field{Z}}
\DeclareMathOperator{\im}{Im}
 \DeclareMathOperator{\PSL}{PSL}
\newcommand{\del}{\partial}
\newcommand{\delb}{\bar\partial}
\newcommand{\R}{{\mathbb{R}}}
\newcommand{\curly}[1]{\mathscr{#1}}
\newcommand{\cA}{\curly{A}}
\newcommand{\cC}{\curly{C}}
\newcommand{\cE}{\curly{E}}
\newcommand{\cF}{\curly{F}}
\newcommand{\cH}{\curly{H}}
\newcommand{\cJ}{\curly{J}}
\newcommand{\cK}{\curly{K}}
\newcommand{\cL}{\curly{L}}
\newcommand{\cN}{\curly{N}}
\newcommand{\cS}{\curly{S}}
\newcommand{\cU}{\curly{U}}
\newcommand{\cW}{\curly{W}}
\newcommand{\cY}{\curly{Y}}
\newcommand{\s}{\gamma}
\newcommand{\bk}{\backslash}
\newcommand{\Ga}{\Gamma}
\newcommand{\pa}{\partial}
\newcommand{\la}{\langle}
\newcommand{\ra}{\rangle}
\newcommand{\vep}{\varepsilon}
\DeclareMathOperator{\tr}{tr} \DeclareMathOperator{\Hom}{Hom}
\DeclareMathOperator{\Aut}{Aut}
\DeclareMathOperator{\GL}{GL} \DeclareMathOperator{\End}{End}
 \DeclareMathOperator{\Ad}{Ad}
\begin{document}
\title[Logarithmic connections and WZNW action]{Logarithmic connections, WZNW action, and moduli of parabolic bundles on the sphere}
\author{Claudio Meneses}
\address{Mathematisches Seminar \\ 
Christian-Albrechts Universit\"at zu Kiel\\ Heinrich-Hecht- \indent Platz 6 \\ 24118 Kiel, Germany}
\curraddr{ 
}
\email{meneses@math.uni-kiel.de}
\author{Leon A. Takhtajan}
\address{Department of Mathematics \\
Stony Brook University\\ Stony Brook, NY 11794-3651 \\ \indent USA;
\newline
\indent The Euler International Mathematical Institute, Saint Petersburg, Russia}
\email{leontak@math.stonybrook.edu}
\maketitle
\begin{abstract}  Moduli spaces of stable parabolic bundles of parabolic degree $0$ over the Riemann sphere are stratified according to the Harder--Narasimhan filtration of underlying vector bundles. Over a Zariski open subset $\cN_{0}$ of the open stratum depending explicitly on a choice of parabolic weights, a real-valued function $\cS$ is defined as the regularized critical value of the non-compact Wess--Zumino--Novikov--Witten action functional. The definition of $\cS$ depends on a suitable notion of parabolic bundle `uniformization map' following from the Mehta--Seshadri and Birkhoff--Grothendieck theorems. It is shown that $-\cS$ is a primitive for a (1,0)-form $\vartheta$ on $\cN_{0}$ associated with the uniformization data of each intrinsic irreducible unitary logarithmic connection. Moreover, it is proved that $-\cS$ is a K\"ahler potential for $(\Omega-\Omega_{\mathrm{T}})|_{\cN_{0}}$,  where $\Omega$ is the Narasimhan--Atiyah--Bott K\"ahler form in $\cN$ and $\Omega_{\mathrm{T}}$ is a certain linear combination of tautological $(1,1)$-forms associated with the marked points. These results provide an explicit relation between the cohomology class $[\Omega]$ and tautological classes, which holds globally over certain open chambers of parabolic weights where $\cN_{0} = \cN$.
\end{abstract}

\tableofcontents
\section{Introduction}

The analytic geometry of moduli spaces of Riemann surfaces and vector bundles is closely tied with the two-dimensional conformal field theory, formulated in the 80s by Belavin, Polyakov and Zamolodchikov \cite{BPZ84}. One of the fundamental models of the theory is the quantum Liouville model, a quantization of the classical theory defined by the Liouville action on a Riemann surface, whose Euler-Lagrange equation is the Liouville equation that determinines the hyperbolic metric on it. Semi-classical analysis of the quantum Liouville theory indicates a deep and unexpected relation between the critical value of Liouville action and the accessory parameters of the Fuchsian uniformization of Riemann surfaces. A precise form of this relation, as well as an unexpected connection with the Weil--Petersson metric on Teichm\"uller space, was proved by P. Zograf and the second author in \cite{ZT87a,ZT87b}. We refer to \cite{T96,TT06} and references therein for further results and details.

Finding an analog of such results for moduli spaces of stable parabolic bundles on Riemann surfaces, in the spirit of \cite{ZT89,TZ07}, remained as an interesting open problem. It is well known that such moduli spaces appear in conformal field theories associated with the Wess--Zumino--Novikov--Witten (WZNW) action for compact Lie groups, introduced by Novikov \cite{N82} and Witten \cite{W84}. Starting from the $\mathrm{SU}(2)$ case  \cite{KZ84}, the compact WZNW models  have been thoroughly investigated (see, e.g., the monograph \cite{DF}). However, as far as the analogy in question is concerned,  it is not the compact WZNW models, but rather their non-compact analogs \cite{Gaw92}, that are the appropriate candidates to consider. Non-compact WZNW models do not lead to rational conformal field theories, and are in general less understood. 
 
In the case of genus 0 this analogy can be made precise as follows. Let $E_{*}$  be a rank $r$ stable parabolic bundle of parabolic degree 0 over $\CC\PP^{1}$ with a fixed set of marked points $z_{1},\dots, z_{n}\in\CC\PP^{1}$. The Mehta--Seshadri theorem establishes the equivalence between the notion of parabolic stability and the existence of a singular Hermitian metric on $E_{*}$ whose associated Chern connection is flat and irreducible over $\CC\PP^{1}\setminus\{z_{1},\dots z_{n}\}$, and has logarithmic singularities at the marked points with the residues compatible with the parabolic structure. Over the Riemann sphere, the Birkhoff--Grothendieck decomposition of holomorphic vector bundles provides an explicit trivialization on the underlying vector bundle $E$, and allows to interpret the Mehta--Seshadri theorem as the existence of a `parabolic bundle uniformization map' $\cJ$.  It follows that the singular Hermitian metric on $E_{*}$ can be described as a smooth map $h:\CC\PP^{1}\setminus\{z_{1},\dots,z_{n}\}\rightarrow \CMcal{H}_{r}$, where $\CMcal{H}_{r}$ is the space of Hermitian positive-definite $r\times r$ matrices, having prescribed asymptotic behavior at the marked points and satisfying the equation
\[
\bar{\partial}\left(h^{-1}\partial h\right)=0.
\]
This equation is precisely the Euler--Lagrange equation of the celebrated WZNW action functional for  $\CMcal{H}_{r}$ -valued maps. Such a map $h$ would be defined only up to the action of the group  $\Aut(E)$ of bundle automorphisms, since the latter is always non-trivial.

However, in contrast to the moduli problem for Riemann surfaces, the nature of the moduli problem in question leads to additional geometric features. In general, the dependence on a choice of parabolic weights induces wall-crossing phenomena. Moreover, the peculiarities of genus 0 define special moduli space stratifications with an explicit dependence on the combinatorial structure of parabolic weight polytopes. Such stratifications, as well as their dependence on parabolic weights, play a decisive role in the main results of this work.

More precisely, over a moduli space $\cN$ of stable parabolic bundles, we are lead to a certain Zariski open subset $\cN_{0}$ of geometric significance.  Namely, there is a stratification of $\cN$ determined by the Harder--Narasimhan filtration associated with the Birkhoff--Grothendieck splitting type of a holomorphic vector bundle on $\CC\PP^{1}$, which depends on a choice of parabolic weights. Over its Zariski open stratum with a fixed Birkhoff--Grothendieck splitting type $E_{N_{0}}$ there is an open subset $\cN_{0}$ such that over it a consistent choice of representatives of $\Aut\left(E_{N_{0}}\right)$-orbits for $\cJ$ and  $h$ can be made. We refer to $\cN_{0}$ as the \emph{regular locus}. As the moduli space $\cN$, the regular locus depends rather nontrivially on the choice of parabolic weights. 
For the purposes of this paper we note that in many cases there exist open chambers in the weight polytopes where $\cN_{0} = \cN$. On the regular locus $\cN_{0}$, the explicit choice of the maps $h:\CC\PP^{1}\setminus\{z_{1},\dots,z_{n}\}\rightarrow \CMcal{H}_{r}$ allows us to define a smooth real-valued function $\cS:\cN_{0}\rightarrow \RR$ as the critical values of the WZNW action, and a smooth $(1,0)$-form $\vartheta$ on $\cN_{0}$, associated with the logarithmic connection $d+h^{-1}\partial h$. Our first main result, Theorem \ref{main-theo1}, is the following explicit relation on $\cN_{0}$,
$$\partial \cS = -\vartheta.$$

The moduli space $\cN$ carries the Narasimhan--Atiyah--Bott K\"ahler form $\Omega$ and the $(1,1)$-forms $\Omega_{ij}$, which are the first Chern forms of tautological line bundles associated with the marked points. 
Our second main result, Theorem \ref{main-theo2}, establishes a relation between these natural objects and the  $(1,0)$-form $\vartheta$ on $\cN_{0}$. Namely,
\[
\bar{\partial}\vartheta = 2\sqrt{-1}\left(\Omega - \Omega_{\mathrm{T}}\right)|_{\cN_{0}},\qquad\Omega_{\mathrm{T}}= \sum \beta_{ij}\Omega_{ij},
\]
where $\beta_{ij}$ depend linearly on the parabolic weights and the bundle splitting coefficients. Together, Theorems \ref{main-theo1} and \ref{main-theo2} imply that $-\cS$ is a K\"ahler potential over $\cN_{0}$ for the difference between the (1,1)-forms $\Omega$ and $\Omega_{\mathrm{T}}$ (Corollary \ref{cor:potential}). It expresses the cohomology class $[\Omega]$ on $\cN_{0}$ as a concrete linear combination of tautological classes $[\Omega_{ij}]$. This result establishes a new relation between non-compact WZNW models and the analytic geometry
of moduli spaces.

The paper is organized as follows. In Section \ref{pb-conn} we review the Mehta--Seshadri theorem for stable parabolic bundle $E_{*}$ on $\CC\PP^{1}$,
introduce bundle uniformization maps and related geometric structures --- the singular Hermitian metric and unitary logarithmic connection on $E_{*}$.
 In Section \ref{Mod} we remind the complex analytic theory of the moduli space $\cN$ of stable parabolic bundles, and define the regular locus $\cN_{0}$ and the $(1,0)$-form $\vartheta$. 
 In Section \ref{WZ} we give a construction of the regularized WZNW action, and in Section \ref{Main} we prove Theorems \ref{main-theo1} and \ref{main-theo2}.

\subsection*{Acknowledgments}  The first author (C.M.) was partially supported by the DFG SPP 2026 priority programme ``Geometry at infinity". The work of the second author (L.T.) was done under the partial support of the NSF grant DMS-1005769.

\section{Parabolic bundles and logarithmic connections} \label{pb-conn}

\subsection{Parabolic bundles} \label{pb}

A parabolic bundle $E_{*}$ of rank $r$ on $\CC\PP^{1}$ and a fixed set of marked points $z_{1},\dots, z_{n}\in\CC\PP^{1}$ is a holomorphic vector bundle $E$ together with a \emph{parabolic structure} --- complete descending flags\footnote{In general, one considers arbitrary flags and weights with multiplicities.} $E_{z_{i}}=F_{i1}\supset F_{i2}\supset\cdots\supset F_{ir}\supset\{0\}$ in the fibers $E_{z_{i}}$, $i = 1,\dots, n$, with weights $0 \leq \alpha_{i1}<\alpha_{i2}<\cdots<\alpha_{ir}<1$.  
The parabolic degree of a parabolic bundle $E_{*}$ is defined as
$$\mathrm{par}\,\mathrm{deg}\,E_{*}=d +\sum_{i=1}^{n}\sum_{j=1}^{r}\alpha_{ij},$$
where $d=\mathrm{deg}\, E$ is the degree of the vector bundle $E$. A morphism $f:E_{*}\rightarrow E'_{*}$ of parabolic vector bundles is a morphism of holomorphic vector bundles
such that for every $z_{i}$, $f(F_{ij})\subset F'_{ik+1}$ whenever $\alpha_{ij}>\alpha'_{ik}$.
A parabolic subbundle $F_{*}$ of $E_{*}$ is a subbundle $F\subset E$ such that for every $z_{i}$ the parabolic structure in $F_{*}$  is induced from the parabolic structure in $E_{*}$ by restriction.

A parabolic bundle $E_{*}$ of parabolic degree $0$ is \emph{stable} (resp. \emph{semi-stable}) 
if every proper parabolic subbundle $F_{*}$ of $E_{*}$ satisfies $\mathrm{par}\,\mathrm{deg}\,F_{*}<0$. (resp. $\leq 0$). 
When $E_{*}$ is stable, its group $\mathrm{Par}\Aut E_{*}$ of parabolic automorphisms consists of nonzero multiples of the identity.  A theorem of Mehta--Seshadri \cite{MS80} generalizes the celebrated theorem of Narasimhan--Seshadri \cite{NS65} for stable vector bundles on a compact Riemann surface to the case of parabolic bundles. It states that when $2g-2 + n > 0$, stable parabolic bundles over a compact Riemann surface $X$ of genus $g$ are precisely those associated with irreducible unitary representations of the fundamental group of the non-compact Riemann surface $X_{0} = X\setminus\{z_{1},\dots,z_{n}\}$. 

The precise formulation in the special case $X=\CC\PP^{1}$ is the following.  Let 
$$\HH=\{\tau\in\CC : \im\tau>0\}$$ 
be the Poincar\'{e} half-plane model of the Lobatchevsky plane and let $X_{0}=\CC\PP^{1}\setminus\{z_{1},\dots,z_{n}\}$, where the normalization $z_{n-2}=0, z_{n-1}=1$ and $z_{n}=\infty$ is always assumed. By the uniformization theorem, 
$$X_{0}\cong\Ga\bk\HH,$$ 
where $\Ga$ is a torsion-free Fuchsian group generated by $n$ parabolic transformations $S_{1},\dots,S_{n}$ satisfying the single relation
$$S_{1}\cdots S_{n}=1$$
and having the property that their fixed points $\tau_{1},\dots,\tau_{n}\in\RR\cup\{\infty\}$ are mapped to $z_{1},\dots,z_{n}\in\CC\PP^{1}$ and $\tau_{n-2}=0, \tau_{n-1}=1$ and $\tau_{n}=\infty$.
The uniformization map --- a classical \emph{Klein's Hauptmodul} (or \emph{Hauptfunktion}) --- is a complex-analytic covering $J:\HH\rightarrow X_{0}$ which is $\Ga$-automorphic
and takes every value in $\CC\setminus\{z_{1},\dots,z_{n-3},0,1\}$ exactly once in the fundamental domain of the group $\Ga$. It extends to the holomorphic map $J: \HH^{\ast}\rightarrow\CC\PP^{1}$, where $\HH^{\ast}$ is the  union of $\HH$ with all cusps for $\Ga$. 

Given a set of parabolic weights $\{\alpha_{ij}\}$, let $W_{i}=\mathrm{diag}(\alpha_{i1},\dots,\alpha_{ir})$ and $D_{i}=e^{2\pi\sqrt{-1}\,W_{i}}$ for each $i=1,\dots,n$. A unitary representation $\rho:\Gamma\rightarrow \mathrm{U}(r)$ is called \emph{admissible} if for each $i=1,\dots,n$ we have $\rho(S_{i})=U_{i}D_{i}U_{i}^{-1}$ with  $U_{i}\in\mathrm{SU}(r)$. Clearly each $U_{i}$ is defined only up to right multiplication by a diagonal matrix.
Thus an admissible unitary representation $\rho$ defines a set of points $[U_{1}],\dots,[U_{n}]$ in the homogeneous spaces of conjugacy classes of $D_{1},\dots,D_{n}$ in $\mathrm{U}(r)$, which are isomorphic to $\mathrm{SU}(r)/U(1)^{r-1}$. The group $\Gamma$ acts on the trivial bundle $\HH\times\CC^{r}$ over $\HH$ by $(\tau,v)\mapsto(\gamma\tau, \rho(\gamma)v)$, defining a local system $E^{\rho}_{0}=\Ga\bk(\HH\times\CC^{r})\rightarrow\Ga\bk\HH\cong X_{0}$. $E^{\rho}_{0}$ extends to a holomorphic vector bundle $E^{\rho}$ over $\CC\PP^{1}$ together with a collection of flags at the fibers over the marked points induced by the data $[U_{i}]$, and determines a semi-stable parabolic bundle $E^{\rho}_{*}$ which is stable when $\rho$ is irreducible, in such a way that $E^{\rho_{1}}_{*}\cong E^{\rho_{2}}_{*}$ if and only if $\rho_{1}\cong \rho_{2}$ (see \cite{MS80} for details).

The Mehta-Seshadri theorem asserts that the converse is also true, namely, that for every stable parabolic bundle $E_{*}$ of parabolic degree 0 there is an irreducible admissible representation $\rho$ such that $E_{*}\cong E^{\rho}_{*}$.

By the Birkhoff--Grothendieck theorem, every holomorphic vector bundle $E$ of rank $r$ over $\CC\PP^{1}$ is isomorphic to a direct sum of line bundles, 
$$E\cong\bigoplus_{j=1}^{r}\mathcal{O}(m_{j}),\qquad m_{1}\leq m_{2}\leq\dots\leq m_{r}.$$ Such an isomorphism depends on a choice of point in $\CC\PP^{1}$, which we assume to be $\infty$. Let $N=\mathrm{diag}(m_{1},\dots,m_{r})$. Upon the choice of a second point, e.g. $0\in\CC\PP^{1}$, the  bundle $E$ is determined 
by the transition function\footnote{In what follows $z^{N}$, $q^{N}$, etc., will always stand for the corresponding diagonal matrices.}
$$g(z)=z^{N}=\mathrm{diag}(z^{m_{1}},\dots,z^{m_{r}}),$$
defined on the intersection $\CC^{*}$ of the charts $\CC=\CC\PP^{1}\setminus\{\infty\}$ and $\CC^{\ast}\cup\{\infty\}=\CC\PP^{1}\setminus\{0\}$ of  $\CC\PP^{1}$. We will denote such a bundle-splitting canonical form by $E_{N}$. It follows that every parabolic bundle $E_{*}\rightarrow\CC\PP^{1}$ is isomorphic to a parabolic bundle of the form $\left(E_{N}\right)_{*}$.

The endomorphisms of $E$ are global sections of the bundle $\mathrm{End}\,E =E^{\vee}\otimes E$, where $E^{\vee}$ is the dual to $E$. When $X=\CC\PP^{1}$, the Riemann-Roch theorem for vector bundles states
$$\dim \check{H}^0(\CC\PP^1,\mathrm{End}\,E)-\dim \check{H}^1(\CC\PP^1,\mathrm{End}\,E)=r^{2}.$$
It follows that $\dim \check{H}^0(\CC\PP^1,\mathrm{End}\,E)$ attains its minimal value $r^{2}$ if and only if $\dim \check{H}^{1}(\CC\PP^1,\mathrm{End}\,E)= \dim \check{H}^{0}(\CC\PP^1,\mathcal{O}(-2)\otimes\mathrm{End}\,E)=0$ or equivalently, if and only if $|m_{j}-m_{k}|\leq 1$ for all $j,k=1,\dots,r$. Such bundles are called 
\emph{evenly-split} \cite{Bel01, Bis02}. For every  $d\in\ZZ$ and $r>1$ there is a a unique evenly-split bundle $E_{N_{0}}$ of degree $d$ and rank $r$ up to isomorphism:
\begin{equation}\label{split-dec}
E_{N_{0}}=\CMcal{O}(m)^{r-p}\oplus\CMcal{O}(m+1)^{p},
\end{equation}
where $d=mr+p,\;0\leq p <r$, and
\[
N_{0}=\mathrm{diag}(\underbrace{m,\dots,m}_{r-p},\underbrace{m+1,\dots,m+1}_{p}).
\]
Let $\Aut E$ denote the group of holomorphic bundle automorphisms of a vector bundle $E$. When $r\mid d$ we have that $\Aut E_{N_{0}}\cong \GL(r,\CC)$. Otherwise, in terms of the affine trivialization over $\CC$, $\Aut E_{N_{0}}$ gets identified with a group of matrix-valued polynomials of block-lower triangular type. Namely, if $r\centernot\mid d$, it follows that
\begin{equation}\label{fac-aut}
\Aut E_{N_{0}}=\mathrm{P}_{N_{0}}\ltimes \mathrm{N}_{N_{0}},
\end{equation}
where $\mathrm{P}_{N_{0}}$ is the subgroup of block-lower triangular matrices relative to the partition $(r-p,p)$, and $\mathrm{N}_{N_{0}}$ is the normal subgroup of functions having the following $(r-p,p)$ block form $$g(z)=\begin{pmatrix} I_{r-p} & 0\\ zC & I_{p}\end{pmatrix},\quad\text{where $C$ is an arbitrary $p\times (r-p)$ matrix}.$$
It follows from this characterization that when $p\neq 0$, the group $\Aut E_{N_{0}}$ preserves the second summand in the decomposition \eqref{split-dec}. Whence in such case every evenly-split bundle $E$ has a subbundle  $\mathcal{O}(m+1)^{p}\hookrightarrow E$, which is independent of the isomorphism $E\cong E_{N_{0}}$. Moreover, the corresponding Harder--Narasimhan filtration of the bundle $E$ reduces to $$E\supset \mathcal{O}(m+1)^{p}\supset\{0\}.$$
For any $z\in\CC\PP^{1}$, let $V_{z} = \mathcal{O}(m+1)^{p}|_{z}\subset E_{z}$ be the fiber at $z$, $\mathrm{P}(V_{z})\subset \mathrm{GL}(E_{z})$ be its parabolic subgroup, and $\mathrm{N}(V_{z})$ be its unipotent radical. Invariantly, restriction to the fibers $E_{0}$, $E_{\infty}$ determines the isomorphisms $\mathrm{P}_{N_{0}}\cong \mathrm{P}(V_{0})$, $\mathrm{N}_{N_{0}}\cong \mathrm{N}(V_{\infty})$. 

The vector bundle $E^{\rho}$ can also be described in terms of another set of transition functions (cf. \cite[Remark 6.2]{NS65}). Namely, since $X_{0}$ is non-compact, by a theorem of Stein the holomorphic vector bundle $E^{\rho}_{0}$ is trivial. Hence there is a holomorphic function $G:\HH\to\mathrm{GL}(r,\CC)$ satisfying 
$$G(\s\tau)=G(\tau)\rho(\s)^{-1},\quad \forall\s \in\Ga,$$
so that the bundle map $G\circ J^{-1}: E^{\rho}_{0}\to X_{0}\times\CC^{r}$ is an isomorphism.
 For any choice of representatives $U_{1},\cdots,U_{n}$, the function $G$ can be written in the neighborhood of each $\tau_{i}$ as 
\begin{equation} \label{factor}
G(\sigma_{i}\tau)=G_{i}(q)\,q^{-W_{i}}U_{i}^{-1}.
\end{equation}
Here $G_{i}(q)$ are holomorphic and invertible in some punctured neighborhood of $q=0$, $q^{-W_{i}}=\mathrm{diag}(e^{-2\pi\sqrt{-1}\alpha_{i1}\tau},\dots,e^{-2\pi\sqrt{-1}\alpha_{ir}\tau})$  and $\sigma_{i}\in\PSL(2,\RR)$ are such that $\sigma_{i}(\infty)=\tau_{i}$ and $\sigma^{-1}_{i}S_{i}\sigma_{i}=\left(\begin{smallmatrix}1 & \pm 1\\ 0 & \;\;1\end{smallmatrix}\right)$, $i=1,\dots,n$. 
Let $\mathcal{U}=\{\cU_{0},\cU_{1},\dots,\cU_{n}\}$ be an open cover of $\CC\mathbb{P}^{1}$ where $\cU_{0}=X_{0}$, and for $i \geq 1$, $\cU_{i}$ are sufficiently small open disks  around $z_{i}$ so that $\cU_{ij}=\emptyset$ for $i,j\neq 0$, $i\neq j$. By the definition of the bundle $E^{\rho}$ (see \cite{MS80}), local trivializations of $E^{\rho}$ over $\cU_{i}$ are given by the maps $\psi_{i}\circ J^{-1}$, where $\psi_{i}(\sigma_{i}\tau)=q^{-W_{i}}U_{i}^{-1}$. Whence the transition functions $g_{0i}:\cU_{0i}\rightarrow \mathrm{GL}(r,\CC)$ of the bundle $E^{\rho}$ for the cover $\mathcal{U}$ are given by the formula
$$g_{0i}=G_{i}\circ\sigma_{i}^{-1}\circ J^{-1},\quad i=1,\dots,n.$$

The definition of $E^{\rho}$ and the maximum principle imply that $\check{H}^{0}(\CC\PP^{1},E^{\rho})\cong (\CC^{r})^{\rho}$, 
where the right-hand side denotes the subspace of $\rho$-invariant vectors in $\CC^{r}$. Hence $m_{j}<0$ for all $j=1,\dots,r$ when $\rho$ is irreducible. Moreover, if $0 < \alpha_{i1}$ for $i = 1,\dots, n$, we have that
$$E^{\vee}\cong \tilde{E}\otimes\mathcal{O}(n),$$
where $E=E^{\rho}$, $\tilde{E}=E^{\bar\rho}$ and $\bar\rho={}^{t}\rho^{-1}$ is the contragradient representation to $\rho$.
This follows from the comparison of the transition functions  ${}^{t}g_{0i}^{-1}$ for $E^{\vee}$ with those of $\tilde{E}$ which correspond to the function ${}^{t}G^{-1}$.  Hence if $E_{*}$ is a stable parabolic bundle of parabolic degree 0 whose parabolic weights satisfy $0 < \alpha_{i1}$ for each $i = 1,\dots,n$, then necessarily
$$-n<m_{j}<0,\qquad j=1,\dots,r.$$

\subsection{Parabolic bundle uniformization map} \label{matrix-haup} 

The parabolic structures on a given bundle splitting $E_{N}$ that arise from an admissible representation can be described by means of a uniformization map of parabolic bundles. When any such parabolic bundle is stable, the uniformization map provides a complex-analytic interpretation of the Mehta--Seshadri theorem, and can be thought of as a matrix analog of the classical Klein's Hauptmodul $J$.
\begin{lemma}\label{Theo-Upsilon}
Let $\rho:\Ga\to\mathrm{U}(r)$ be an admissible representation such that $E^{\rho}\cong E_{N}$. Given a choice of representatives $U_{1},\dots, U_{n}\in\mathrm{SU}(r)$ there is a holomorphic function $Y:\HH\rightarrow\mathrm{GL}(r,\CC)$ satisfying 
\begin{equation} \label{auto-Y}
Y(\s\tau)=Y(\tau)\rho(\s)^{-1},\quad \forall\,\s \in\Ga,\;\tau\in\HH,
\end{equation}
and having the Fourier series expansions 
\begin{equation} \label{Y-Fourier}
Y(\sigma_{i}\tau)=\left(\sum_{k=0}^{\infty}C_{i}(k)q^{k}\right)q^{-W_{i}}U_{i}^{-1},\quad i=1,\dots,n-1,
\end{equation}
and
\begin{equation} \label{Y-Fourier-2}
Y(\sigma_{n}\tau)=q^{-N}\left(\sum_{k=0}^{\infty}C_{n}(k)q^{k}\right)q^{-W_{n}}U_{n}^{-1},
\end{equation}
where $C_{i}(0)\in\mathrm{GL}(r,\CC)$ for $i=1,\dots,n$. 
The set $\Upsilon(\rho)$ of all functions $Y$ with these properties is in one-to-one correspondence with the set of all isomorphisms $E^{\rho}\cong E_{N}$, and is a principal homogeneous space for the automorphism group $\mathrm{Aut}\,E_{N}$ of the bundle splitting $E_{N}$.
\end{lemma}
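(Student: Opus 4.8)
The plan is to construct $Y$ from the two structures already at hand: the trivialization $\Psi$ of the flat bundle $E^{\rho}_{0}$ furnished by Remark \ref{Transition}, which satisfies \eqref{auto-Y} together with the cusp factorization $\Psi(\sigma_{i}\tau)=\Phi_{i}(q)q^{-W_{i}}U_{i}^{-1}$, and the Birkhoff--Grothendieck decomposition $E^{\rho}\cong\bigoplus_{j}\mathcal{O}(m_{j})$ of Remark \ref{B-G}, subject to $-n<m_{j}<0$ from Remark \ref{dual}. I would set $Y=(g\circ J)\,\Psi$, where $g:X\to\mathrm{GL}(r,\CC)$ is the change of frame between the flat trivialization $\Psi\circ J^{-1}$ and a Birkhoff--Grothendieck frame of $E^{\rho}$ over the affine chart $\PP^{1}\setminus\{\infty\}$. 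Since $J$ is $\Ga$-invariant, so is $g\circ J$, whence $Y$ inherits the automorphy \eqref{auto-Y} of $\Psi$. The role of the Birkhoff--Grothendieck theorem is to guarantee that $g$ is rational on $\PP^{1}$ and can be chosen so that at the finite cusps $g\,\Phi_{i}$ is a holomorphic invertible germ, while at $\tau_{n}=\infty$ the chart change to $\PP^{1}\setminus\{0\}$, effected through the transition $z^{N}$, produces exactly the diagonal factor $q^{-N}$ appearing in \eqref{Y-Fourier-2}.

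To read off the expansions \eqref{Y-Fourier}--\eqref{Y-Fourier-2} I would pass between the coordinates $z$ and $q$ at each cusp using \eqref{J-Fourier}, \eqref{J-inverse-1}, \eqref{J-inverse-2}, according to which $q$ is a local holomorphic coordinate vanishing at the marked point, with $q\sim (z-z_{i})/a_{i}(1)$ for $i<n$ and $q\sim a_{n}(-1)/z$ at $\infty$. At a finite cusp the factor $q^{-W_{i}}U_{i}^{-1}$ is common to $\Psi$ and $Y$, and the corrected prefactor $g\,\Phi_{i}$ gives \eqref{Y-Fourier} with $C_{i}(0)=(g\,\Phi_{i})(0)$ invertible; at $\infty$ the extra $q^{-N}$ gives \eqref{Y-Fourier-2}. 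As a global consistency check, and to see why the twist at $\infty$ cannot be removed, I would examine the logarithmic differential $d\log\det\cY$, where $\cY=Y\circ J^{-1}$: it descends to a meromorphic differential on $\PP^{1}$ with simple poles of residue $-\sum_{j}\alpha_{ij}$ at the finite $z_{i}$ and residue $-\sum_{j}(m_{j}+\alpha_{nj})$ at $\infty$, and the vanishing of the total residue yields $\sum_{j}m_{j}=-\sum_{i,j}\alpha_{ij}$, i.e. $\mathrm{par}\,\mathrm{deg}\,E^{\rho}_{*}=0$. The same computation shows that a trivialization holomorphic and invertible at \emph{every} cusp is impossible, so the factor $q^{-N}$ is precisely the deficit dictated by the degree.

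For the torsor statement I would take $Y,Y'\in\Upsilon(\rho)$ and set $M=Y'Y^{-1}$. Because both obey \eqref{auto-Y} with the same right factor $\rho(\gamma)^{-1}$, $M$ is $\Ga$-invariant and descends to a holomorphic map $X\to\mathrm{GL}(r,\CC)$, i.e. an automorphism of $E^{\rho}_{0}$ over $X$. The cusp normalizations then force $M$ to extend across the marked points. At a finite $z_{i}$ the common factor $q^{-W_{i}}U_{i}^{-1}$ cancels and $M$ becomes a ratio of holomorphic invertible germs with invertible values, hence extends holomorphically and invertibly. At $\infty$ the same cancellation leaves $M(\sigma_{n}\tau)=q^{-N}H(q)q^{N}$ with $H$ holomorphic invertible at $q=0$; the requirement that $(q^{-N}Hq^{N})_{jk}=q^{m_{k}-m_{j}}H_{jk}$ be holomorphic at $q=0$ is exactly the Birkhoff--Grothendieck condition that the entries of $M$ be global sections of the $\mathcal{O}(m_{k}-m_{j})$, that is $M\in\Aut E^{\rho}=\Aut\!\big(\bigoplus_{j}\mathcal{O}(m_{j})\big)$. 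Running the computation backwards shows that left multiplication by any element of $\Aut E^{\rho}$ preserves all defining properties, so the action is transitive, and it is free since $(M\circ J)Y=Y$ forces $M=\id$. Irreducibility of $\rho$ enters only through Schur's lemma: the constant right multiplications preserving \eqref{auto-Y} are exactly the scalars $\CC^{*}$ (the parabolic freedom), and on the left these are the central scalars $\CC^{*}\subset\Aut E^{\rho}$ acting identically by $Y\mapsto cY$. Since $Y$ is determined only up to $Y\sim cY$, quotienting the free transitive $\Aut E^{\rho}$-action by this common $\CC^{*}$ leaves a free transitive action of $P(\Aut E^{\rho})$, which is the asserted torsor structure.

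The hard part, I expect, is the existence half: reorganizing the singularity of the flat trivialization at $\infty$ into the clean diagonal twist $q^{-N}$ while keeping the finite cusps in the standard Mehta--Seshadri form, and simultaneously certifying that every $C_{i}(0)$ is invertible. This is precisely where the parabolic normalization $q^{-W_{i}}U_{i}^{-1}$ and the Birkhoff--Grothendieck normalization $z^{N}$ must be reconciled, and where $-n<m_{j}<0$ of Remark \ref{dual} is used. By contrast the torsor statement is essentially formal once the extension-at-$\infty$ observation --- that conjugation by $q^{-N}$ turns a holomorphic invertible germ into a holomorphic endomorphism precisely when its entries satisfy the degree bounds set by the $m_{j}$ --- is in place.
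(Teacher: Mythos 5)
Your proposal is correct and follows essentially the same route as the paper: the paper also sets $Y=(g_{0}\circ J)^{-1}\Psi$, where $g_{0}$ comes from splitting the transition functions $g_{0i}$ via the Birkhoff--Grothendieck theorem (with the twist $z^{N}$ at $\infty$ producing the $q^{-N}$ factor), and likewise identifies $Y_{1}Y_{2}^{-1}$ with an automorphism of $E^{\rho}$ and uses irreducibility to reduce the residual right ambiguity to scalars, yielding the $P(\Aut E^{\rho})$-torsor. Your write-up is if anything more explicit than the paper's on the extension of $Y'Y^{-1}$ across $\infty$ and on exactly where Schur's lemma enters.
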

\begin{proof} Consider the function $G$, the open cover $\mathcal{U}$, and the transition functions $g_{0i}$ defined before. The existence of the function $Y$ is a consequence of the equivalence of bundles defined by the transition functions $g_{0i}$ and the Birkhoff--Grothendieck transition function $g(z)=z^{N}$. 
It follows that there exist holomorphic functions $g_{0}:\cU_{0}\to\mathrm{GL}(r,\CC)$ and $g_{i}:\cU_{i}\to\mathrm{GL}(r,\CC)$, $i=1,\dots,n$, such that  
$$g_{0i}=g_{0}\,g_{i}^{-1},\quad i\neq n\quad\text{and}\quad g_{0n}=g_{0}\,z^{N}g_{n}^{-1}.$$
Put $Y=\left(g_{0}\circ J\right)^{-1}G$. It follows from \eqref{factor} that $Y(\tau)$ has Fourier series expansions \eqref{Y-Fourier} and \eqref{Y-Fourier-2}. Conversely, a choice of map $Y:\HH\rightarrow\mathrm{GL}(r,\CC)$ satisfying \eqref{auto-Y}-\eqref{Y-Fourier-2} determines an isomorphism $E^{\rho}\cong E_{N}$.

If $U'_i=U_iV_i$ with diagonal $V_i\in\mathrm{SU}(r)$, the Fourier series expansions \eqref{Y-Fourier} and \eqref{Y-Fourier-2} would have 
Fourier coefficients $C'_i(k)=C_i(k)V_i^{-1}$, $i=1,\dots,n$.
Thus, the set $\Upsilon(\rho)$ only depends on $\rho$, and is in bijective correspondence with the set of isomorphisms $E^{\rho}\cong E_{N}$ by definition. The set of isomorphisms $E^{\rho}\cong E_{N}$ is a principal homogeneous space for $\mathrm{Aut}\,E_{N}$, therefore $Y_{1}$ and $Y_{2}$ are two functions satisfying \eqref{auto-Y}--\eqref{Y-Fourier-2} if and only if $Y_{1}\cdot Y_{2}^{-1}=g\circ J$, where $g$ is the local form over $\CC$ of an automorphism of $E_{N}$.
\end{proof}

\begin{remark} \label{Klein}
Every element $Y\in\Upsilon(\rho)$ may be considered as a matrix analog of the Klein's Hauptmodul. It follows from \eqref{auto-Y} that the map 
$$E_{0}^{\rho}\ni [\tau,v]\mapsto\cJ(\tau,v)=(J(\tau),Y(\tau)v)\in E_{N}|_{X_{0}}$$ 
establishes the isomorphism between the local system $E_{0}^{\rho}$ over $\Gamma\bk\HH$ and the restriction $E_{N}|_{X_{0}}$. Properties \eqref{Y-Fourier}--\eqref{Y-Fourier-2} ensure that $\cJ$ extends to an isomorphism of parabolic bundles $E_{\ast}^{\rho}\cong \left(E_{N}\right)_{\ast}$ which plays the role of a \emph{bundle uniformization map}. 
In particular, the ordered frames defined by the matrices $C_{1}(0),\dots, C_{n}(0)$ determine the corresponding flags on the fibers $(E_{N})_{z_{1}},\dots,(E_{N})_{z_{n}}$. 
\end{remark}

\subsection{Singular Hermitian metrics and unitary logarithmic connections}\label{can-con} 

Denote by $D$ the divisor $z_{1}+\dots+z_{n}$ in $\CC\PP^{1}$. A \emph{logarithmic connection} on a holomorphic bundle $E\rightarrow \CC\PP^{1}$ is a map of sheaves 
$$\nabla : \mathcal{O}(E)\rightarrow \mathcal{O}\left(E\otimes K_{\CC\PP^{1}}(D)\right)$$
that is $\CC$-linear and satisfies the Leibniz rule with respect to the $\mathcal{O}_{\CC\PP^{1}}$-module structure on $\mathcal{O}(E)$. With every logarithmic connection $\nabla$ there is an associated set of \emph{residues} $\{\mathrm{Res}_{z_{i}}\nabla \in \End E_{z_{i}}\,:\, i = 1,\dots,n\}$ (see \cite{Deligne,Simpson90,BL11}).  A choice of the base point $z_{0}\in\CC\PP^{1}$ and local holomorphic frames near each $z_{1},\dots,z_{n}$ determines a \emph{monodromy representation} for a logarithmic connection $\nabla$. Its conjugacy class is an invariant of $\nabla$. 

A logarithmic connection $\nabla$ on the underlying bundle $E$ of a parabolic bundle $E_{*}$ is said to be \emph{adapted to the parabolic structure of $E_{*}$} if for every $i=1,\dots,n$, $\mathrm{Res}_{z_{i}}\nabla$ is semisimple with eigenvalues $0 <\alpha_{i1}<\dots<\alpha_{ir}<1$ and eigenlines $L_{i1},\dots,L_{ir}$, such that the corresponding flag subspaces are $F_{ij} = L_{ij}\oplus\dots\oplus L_{ir}$ for each $j =1,\dots,r$.  

The space $\cC(E_{\ast})$ of logarithmic connections adapted to $E_{*}$ is nonempty if $E_{*}$ is an indecomposable parabolic vector bundle of parabolic degree zero (see \cite[Proposition 4.1]{BL11}), and is an affine space modeled on the vector space of (strongly) \emph{parabolic Higgs fields} on $E_{*}$ --- a subspace
$$\check{H}^{0}\left(\CC\PP^{1}, \left(\mathrm{Par}\End E_{*}\right)^{\vee}\otimes K_{\CC\PP^{1}}\right)\subset \check{H}^{0}\left(\CC\PP^{1}, \End E\otimes K_{\CC\PP^{1}}(D)\right),$$
consisting of 
$\End E$-valued meromorphic $(1,0)$-forms $\Phi$ on $\CC\PP^{1}$ with at most simple poles on $D$, whose residues $\mathrm{Res}_{z_{i}}\Phi$ are nilpotent and preserve the flags on $E_{z_{i}}$ for all marked points.\footnote{It can be verified that for any complete weighted flag $F$ on an $r$-dimensional vector space $V$, the set of semisimple endomorphisms of $V$ preserving $F$ with fixed eigenvalues $0<\alpha_{1}<\dots <\alpha_{r}<1$ is an affine space for  the unipotent radical $\mathfrak{n}(F)\subset \mathfrak{p}(F)$ of the parabolic Lie algebra of $F$, and the latter is the space of nilpotent endomorphisms preserving $F$.}  

It follows from the Mehta-Seshadri theorem that every stable parabolic bundle $E_{*}$ admits a logarithmic connection adapted to $E_{*}$ with monodromy given by an irreducible admissible representation $\rho$. Under the isomorphism $E_{*}\cong E^{\rho}_{*}$, the standard Hermitian inner product in $\CC^{r}$ and the trivial connection $d$ on $\HH\times\CC^{r}$ define a Hermitian metric $h_{0}$ in the local system $E_{0}\cong E_{0}^{\rho}=\Ga\bk(\HH\times\CC^{r})$ with corresponding flat Chern connection $\nabla_{0} = d+A_{0}$, so that $A_{0}=h^{-1}_{0}\del h_{0}$ in terms of a holomorphic frame on $X_{0}$. 
These structures extend to a singular Hermitian metric\footnote{A gauge-theoretic approach to the Mehta-Seshadri theorem is presented in \cite{Biq91}, where such singular Hermitian metrics are called \emph{adapted to a parabolic structure}.} $h_{E}$ and a logarithmic connection $\nabla_{E} = d+A_{E}$ in the bundle $E^{\rho}$. In terms of the local trivialization maps $\psi_{i}$ over the neighborhoods $\cU_{i}$ of $z_{i}$ considered before, $h_{E}$ and $A_{E}$ are given by $$h_{i}=|\zeta_{i}|^{2W_{i}}\quad\text{and}\quad A_{i}=h_{i}^{-1}\partial h_{i}=\frac{W_{i}}{\zeta_{i}}d\zeta_{i},\qquad \text{where}\quad \zeta_{i}=q\circ\sigma_{i}^{-1}.$$  

Both $h_{E}$ and $\nabla_{E}$ can also be described in terms of the splitting $E_{N}$. Namely, the isomorphism $\cJ$ in Remark \ref{Klein} gives  a trivialization of $E^{\rho}$ over $\cU_{0}$, which extends to $\CC$. Put $\cY(z)=Y(J^{-1}(z))$. It follows from the Fourier series expansion of $J(\tau)$ (see, e.g., \cite[Lemma 2]{ZT87a}) and \eqref{auto-Y}--\eqref{Y-Fourier} that $\cY(z)$ is a `multi-valued' meromorphic function on $\CC\PP^{1}$ with the following behavior near the points $z_{i}$,
\begin{align}
\cY(z) &=\left(C_{i}(0)+\sum_{k=1}^{\infty}\tilde{C}_{i}(k)(z-z_{i})^{k}\right)e^{-W_{i}\log(z-z_{i})}U^{-1}_{i},\quad i\neq n,\label{Y-1}\\ 
\intertext{and near $z_{n}=\infty$,}
\cY(z)&=z^{N}\left(C_{n}(0)+\sum_{k=1}^{\infty}\tilde{C}_{n}(k)z^{-k}\right)e^{W_{n}\log z}U_{n}^{-1}.\label{Y-2}
\end{align}
The corresponding Hermitian metric $h_{E}$ in this trivialization is given by the matrix-valued function 
\begin{align} 
h(z,\bar{z})& =(\cY(z)\cY(z)^*)^{-1}, \label{h} \\
\intertext{where $\cY^{*}=^{t}\!\!\bar{\cY}$ is the Hermitian conjugate of $\cY$, and the logarithmic connection $\nabla_{E}$ by the matrix-valued $(1,0)$-form}
A(z)dz&=\cY(z)(\cY(z)^{-1})'dz=-\cY'(z)\cY(z)^{-1}dz. \label{A}
\end{align}
It follows from equation \eqref{Y-1} that the matrix-valued function $A=-\cY'\cY^{-1}$ is holomorphic on $X_{0}$ with simple poles at the points $z_{i}$, $i\neq n$: 
\[
A(z)=\frac{A_{i}}{z-z_{i}} +O(1), \quad\text{where}\quad A_{i}=C_{i}(0)W_{i}C_{i}(0)^{-1},
\]
Moreover, it follows from \eqref{Y-2} that as $z\to \infty$,
\[
z^{-N}A(z)z^{N} + \frac{N}{z}= - \frac{A_{n}}{z} + O\left(\frac{1}{z^{2}}\right),\quad\text{where}\quad A_{n}=C_{n}(0) W_{n} C_{n}(0)^{-1}.
\]

\section{The complex analytic theory of the moduli space}\label{Mod}

In what follows, $\cN$ will denote a moduli space of rank $r$ stable parabolic bundles of parabolic degree 0 over $\CC\PP^{1}$, depending on a choice of admissible parabolic weights $\cW =\{W_{1},\dots,W_{n}\}$. Necessary and sufficient conditions (in the form of parabolic weight inequalities) for a choice of admissible parabolic weights to determine a non-empty moduli space are described in \cite{Bel01, Bis02}, and we will assume that they are satisfied henceforth. According to the Mehta-Seshadri theorem, $\cN$ is real-analytically isomorphic to the $\mathrm{U}(r)$-character variety 
$$\cN\cong\cK=\Hom(\Ga,\mathrm{U}(r))^{0}/\mathrm{U}(r),$$
of equivalence classes of admissible irreducible unitary representations, and for generic parabolic weights is a complex manifold of dimension   
\begin{equation}\label{dim}
\dim_{\CC}\cN=\tfrac{1}{2}n(r^2-1)-r^2+1.
\end{equation}
For every choice of degree $-nr < d < 0$, the parabolic weight inequalities, granting the existence of a non-empty moduli space $\cN$, determine a polytope containing a finite collection of semi-stability walls, whose complement is a finite set of open chambers. For any choice of parabolic weights in an open chamber, every semistable parabolic bundle of parabolic degree 0 is strictly stable, and the induced moduli space $\cN$ is a compact complex manifold. The biholomorphic type of $\cN$ is an invariant of the open chamber \cite{BH95}.

\subsection{The complex structure} \label{cs}

The special Hermitian metric $h_{E_{0}}$ in the local system $E_{0}$ (Section \ref{can-con}) determines a Hermitian metric $h_{\End E_{0}}$ in the induced local system $\End E_{0}\cong E^{\Ad\rho}_{0}$, where $\Ad\rho := \Ad\circ\rho$ is the induced adjoint representation in $\End \CC^{r}$, which together with the hyperbolic metric on $X_{0}$ defines the Hodge $\ast$-operator on the $(p,q)$-forms on $X_{0}$ with values in $\End E_{0}$. Denote by $\cH^{p,q}(X,\End E_{0})$ the corresponding spaces of square integrable harmonic $(p,q)$-forms on $X_{0}$ with values in $\End E_{0}$.\footnote{We use the same normalization for the inner product on $(p,q)$-forms and for the Hodge $\ast$-operator as in \cite{ZT89,TZ07}.}
 
The deformation theory identifies the holomorphic tangent space $T_{\{E_{*}\}}\cN$ at a point $\{E_{*}\}\in\cN$ 
with the complex vector space $\check{H}^{1}\left(\CC\PP^{1}, \mathrm{Par}\End E_{*}\right)$ modeling infinitesimal deformations of the parabolic bundle structure of a representative $E_{*}$, while the holomorphic cotangent space $T^{\ast}_{\{E_{*}\}}\cN$ is identified with the vector space $\check{H}^{0}\left(\CC\PP^{1}, \left(\mathrm{Par}\End E_{*}\right)^{\vee}\otimes K_{\CC\PP^{1}}\right)$ of parabolic Higgs fields on $E_{*}$.
The isomorphism of these vector spaces with spaces of square integrable $\End E_{0}$-valued harmonic forms on $X_{0}$ follows from Dolbeault's theorem and the structure of the bundle $\mathrm{Par}\End E_{*}$ of parabolic endomorphisms,\footnote{It is implicit in \cite{MS80} that for any admissible representation $\rho$, $\mathrm{Par}\End E^{\rho}_{*}=E^{\Ad\rho}$.} 
and as in the usual stable bundle case \cite{NS64},  $T_{\{E_{*}\}}\cN$ is also identified with $\cH^{0,1}(X_{0},\End E_{0})$ (see \cite{MS80} and \cite{TZ07} for details).
Correspondingly, $T^{\ast}_{\{E_{*}\}}\cN$ is identified with $\cH^{1,0}(X_{0},\End E_{0})$. The duality pairing 
$$\cH^{0,1}(X_{0},\End E_{0})\otimes\cH^{1,0}(X_{0},\End E_{0})\rightarrow\CC$$ 
is given by
\begin{equation}\label{eq:pairing}
(\nu,\theta)\mapsto\int\limits_{X_{0}}\nu\wedge\theta, \quad\nu
\in\cH^{0,1}(X_{0},\End E_{0}),\,\theta\in\cH^{1,0}(X_{0},\End E_{0}),
\end{equation}
where $\wedge$ denotes the composition of the wedge product of matrix-valued forms and the trace map $\tr: \End E_{0}\to\CC$.

\subsection{Automorphic forms of weight $2$ with the representation $\Ad\rho$} \label{Ad}

Let $\rho$ be an admissible representation of $\Ga$. By definition, a matrix-valued automorphic form of weight $2$ for the group $\Ga$ with the representation $\Ad\rho$ is a holomorphic $r\times r$ matrix-valued function
$f:\HH\rightarrow\End\CC^{r}$, satisfying
$$f(\s\tau)\s'(\tau)=\Ad\rho(\s)f(\tau)=\rho(\s)f(\tau)\rho(\s)^{-1},\quad\s\in\Ga.$$
An automorphic form is said to be regular if 
\begin{equation}\label{reg}
\lim_{\tau\rightarrow \infty}f(\sigma_{i}\tau)\sigma_{i}^{\prime}(\tau)
\end{equation}
exists for all $i=1,\dots,n$. 
Equivalently, since  $0 <\alpha_{i1}<\dots<\alpha_{ir}<1$,
\begin{equation} \label{F-Fourier}
f(\sigma_{i}\tau)\sigma_{i}^{\prime}(\tau)
=U_{i}q^{W_{i}}\left(\sum_{k=0}^{\infty}B_{i}(k)q^{k}\right)q^{-W_{i}}U_{i}^{-1},
\end{equation}
for every $i=1,\dots,n$, where the matrices $U_{i}\in \mathrm{SU}(r)$ satisfy
$\rho(S_{i})=U_{i}e^{2\pi\sqrt{-1}\,W_{i}}U_{i}^{-1}$, and the matrices $B_{i}(0)$ are lower triangular, i.e. $B_{i}(0)\in\frak{b}(r)$.
Denote by $\frak{M}_{2}(\Ga, \Ad\rho)$ the space of regular matrix-valued automorphic forms
of weight $2$ for $\Ga$ with the representation $\Ad\rho$. The subspace 
$$\frak{S}_{2}(\Ga, \Ad\rho)\subset\frak{M}_{2}(\Ga, \Ad\rho)$$ of cusp forms is defined by the conditions
$$\lim_{\tau\rightarrow \infty}f(\sigma_{i}\tau)\sigma_{i}^{\prime}(\tau)=0\quad\text{for all}\quad i=1,\dots,n,$$
or equivalently, by the matrices $B_{i}(0)$ being strictly lower triangular, $B_{i}(0)\in\frak{n}(r)$. 

The space 
$\frak{S}_{2}(\Ga, \Ad\rho)$ 
of cusp forms of weight 2 carries a natural inner product, the Petersson inner product,  given by the formula 
\begin{equation*}
\la f_{1}, f_{2}\ra =2\iint\limits_{D}\tr( f_{1}(\tau)f_{2}(\tau)^{\ast})d^{2}\tau,\quad f_{1}, f_{2}\in \frak{S}_{2}(\Ga,\Ad\rho),
\end{equation*} 
where $D$ is a fundamental domain of $\Gamma$ in $\HH$ and $d^{2}\tau = \frac{\sqrt{-1}}{2} d\tau\wedge d\bar{\tau}$. The integral is absolutely convergent when at least one of $f_{1}, f_{2}\in\frak{M}_{2}(\Ga, \Ad\rho)$ is a cusp form. 
There is a projection $P: \frak{M}_{2}(\Ga, \Ad\rho)\mapsto \frak{S}_{2}(\Ga, \Ad\rho)$, uniquely characterized by the property 
\[
\la P(f),g\ra=\la f,g\ra\quad\text{for all}\quad f\in\frak{M}_{2}(\Ga, \Ad\rho)\;\;\text{and}\;\; g\in  \frak{S}_{2}(\Ga,\Ad\rho).
\]
As an immediate consequence of the Mehta--Seshadri theorem, when $\rho$ is irreducible, there is an isometric isomorphism
$$\frak{S}_{2}(\Ga, \Ad\rho)\simeq T^{*}_{E^{\rho}_{*}}\cN.$$
Indeed, it follows from Lemma \ref{Theo-Upsilon} and the Fourier series expansions of $J(\tau)$ (see, e.g., \cite[Lemma 2]{ZT87a}) that for every $Y\in\Upsilon(\rho)$, the map
\begin{equation*} 
\frak{S}_{2}(\Ga, \Ad\rho)\ni f\mapsto \mathcal{F}\in \cH^{1,0}(X_{0}, \End E_{0}),
\end{equation*}
where
$$\mathcal{F}(z)= Y(J^{-1}(z))f(J^{-1}(z))(J^{-1})'(z)Y(J^{-1}(z))^{-1},$$
is an isomorphism.\footnote{The factor of 2 in the definition of the Petersson inner product reflects the normalization of the inner product of $(p,q)$-forms in \cite{ZT89,TZ07}.} 
The map $f\mapsto \mathcal{F}$ is also an isomorphism between the vector spaces of cusp forms and parabolic Higgs fields, realized in the affine trivialization of $E_{N}$ over $\CC$. Similarly, there is an isomorphism
$$\overline{\frak{S}_{2}(\Ga, \Ad\rho)}\simeq T_{E^{\rho}_{*}}\cN,$$
where $\overline{\frak{S}_{2}(\Ga, \Ad\rho)}$ is the vector space of Hermitian conjugates $f^{*}$ of $f\in\frak{S}_{2}(\Ga, \Ad\rho)$. The corresponding map
\begin{equation*}
\overline{\frak{S}_{2}(\Ga, \Ad\rho)}\ni f^{\ast}\mapsto \mathcal{F}^{*}\in \cH^{1,0}(X_{0}, \End E_{0})\simeq\check{H}^{1}\left(\CC\PP^{1}, \mathrm{Par}\End E^{\rho}_{*}\right)
\end{equation*}
is given by
$$ \mathcal{F}^{*}(z)= Y(J^{-1}(z))f^{\ast}(J^{-1}(z))\overline{(J^{-1})'}(z)Y(J^{-1}(z))^{-1}.$$
As a consequence of the above, the dimension formula  
$$\dim\frak{S}_{2}(\Ga,\Ad\rho)=\frac{1}{2}n(r^{2}-r)-r^{2}+1,$$
implying \eqref{dim}, can be obtained as a special case of the general formula in \cite[Corollary 4.2 on p.~485]{Hej83} (cf. \cite{Men17a}, where a proof is given in terms of the Riemann--Roch theorem for vector-valued automorphic forms).

\subsection{Automorphic forms for stable bundles} It is a classical result that for a Riemann surface $X_{0}$ of type $(0,n)$, the Schwarzian derivative of the uniformization map $J:\HH\mapsto X_{0}$ is a regular automorphic form of weight $4$ for $\Gamma$, which does not depend on a particular choice of $J$ in the orbit of the automorphism group $\mathrm{PSL}(2,\CC)$ of $X_{0}$ (see, e.g., \cite{ZT87a}). For a stable parabolic bundle $E^{\rho}_{*}$, the analog of the uniformization map is given by the function $Y(\tau)$ from Lemma \ref{Theo-Upsilon}, which realizes the isomorphism $E^{\rho}\cong E_{N}$,
and the analog of the Schwarzian derivative is given by the logarithmic derivative 
\begin{equation}\label{cA}
\cA(Y)(\tau)=-Y(\tau)^{-1}Y'(\tau).
\end{equation} 
It follows from Lemma \ref{Theo-Upsilon} that for every $Y\in\Upsilon(\rho)$, the function $\cA(Y)$ is an automorphic form of weight $2$ for the representation $\Ad\rho$, satisfying the regularity condition \eqref{reg} for $i=1,\dots,n-1$. For $i=n$ the function $\cA(Y)$ has asymptotics
\begin{equation}\label{A-n}
\cA(Y)(\sigma_{n}\tau)=2\pi\sqrt{-1}U_{n}(W_{n}+q^{W_{n}}C_{n}(0)^{-1}NC_{n}(0)q^{-W_{n}})U_{n}^{-1} +o(1)
\end{equation} 
as $\tau\rightarrow \infty$, which do not immediately guarantee its regularity at $i=n$ and hence that $\cA(Y)\in \frak{M}_{2}(\Ga, \Ad\rho)$. The automorphism group $\Aut E_{N}$ acts on the set $\frak{A}_{2}(\Ga,\Ad\rho) = \{\cA(Y)\; :\; Y\in\Upsilon(\rho)\}$ by the formula
\begin{equation}\label{action}
g\cdot\cA(Y)=\cA -Y^{-1}(g\circ J)^{-1}(g\circ J)'Y.
\end{equation}
It is a fundamental question whether for a given $Y\in\Upsilon(\rho)$ there is a unique $g\in \Aut E_{N}$ such that $g\cdot\cA(Y)\in\frak{M}_{2}(\Ga, \Ad\rho)$, and whether such a choice would depend continuously on moduli parameters. The existence and uniqueness of a regular orbit representative is equivalent to the solvability of the Riemann--Hilbert problem for $\rho$ (see \cite[Section 6.2]{Men18}).  However, such a choice could not be done consistently on the whole moduli space. The possibility of a maximal consistent choice determines a Zariski open subset $\cN_{0}\subseteq\cN$ in the open Harder--Narasimhan stratum of evenly-split stable parabolic bundles in $\cN$, on which there would be a unique $g\in \Aut E_{N_{0}}$ such that $g\cdot\cA(Y)\in\frak{M}_{2}(\Ga, \Ad\rho)$. We will call $\cN_{0}$ the \emph{regular locus}.

Namely, suppose that $N=N_{0}$ and consider first the simplest case $p=0$, so that $r\mid d$ and $N_{0}=m I_{r}$. In this case $\Aut E_{N_{0}}\cong\GL(r,\CC)$, 
so that by \eqref{action} the function $\cA(Y)$ is independent of the choice of $Y\in\Upsilon(\rho)$ and determines a unique automorphic form $\cA$ of weight $2$, and it follows from \eqref{A-n} that $\cA\in\frak{M}_{2}(\Ga, \Ad\rho)$. 

The case $0 < p < r$, i.e. $r\centernot\mid d$, is more subtle. In order to define the Zariski open subset $\cN_{0}\subseteq\cN$ where $\frak{A}_{2}(\Ga,\Ad\rho)$ contains exactly one regular automorphic form for each $\{E^{\rho}_{*}\}\in\cN_{0}$, we are required to impose an additional condition on the flags over $E_{\infty}$. 
Namely, let $\cF(E_{\infty})$ be the complete flag manifold on $E_{\infty}$ and $\mathrm{Gr}_{r-p}(E_{\infty})$ be the Grassmannian of $(r-p)$-planes in $E_{\infty}$, together with the natural projection $\mathrm{pr}_{\infty}:\cF(E_{\infty})\rightarrow \mathrm{Gr}_{r-p}(E_{\infty})$. Restriction of the unique subbundle $\mathcal{O}(m+1)^{p}\hookrightarrow E$ to the fiber $E_{\infty}$ determines a special $p$-plane $V_{\infty}\subset E_{\infty}$. Denote by $\mathrm{Gr}^{0}_{r-p}(E_{\infty})\subsetneq \mathrm{Gr}_{r-p}(E_{\infty})$ the Zariski open subset consisting of $(r-p)$-planes $V'_{z}$ satisfying
\begin{equation}\label{Zariski}
V'_{\infty}\cap V_{\infty} = \{0\}.
\end{equation}
Under a choice of Mehta--Seshadri uniformization map $\cJ$, the $p$-plane $V_{\infty}\subset E_{\infty}$ gets identified with the span of $\{\mathbf{e}_{r-p+1},\dots,\mathbf{e}_{r}\}\subset\CC^{r}$, and the Zariski open condition \eqref{Zariski} is equivalent to the existence of a unique factorization $C_{n}(0) = M\Pi_{0}DL$, where $\Pi_{0}$ is the permutation matrix of the product of transpositions $(1,r)(2,r-1)\dots(\lfloor r/2\rfloor, r- \lfloor r/2\rfloor +1)$, and
\begin{equation}\label{eq:Bruhat}
M = \begin{pmatrix} I_{r-p} & 0\\
A & I_{p}\end{pmatrix},\quad
D = \begin{pmatrix} D_{p} & 0\\
0 & D_{r-p}\end{pmatrix},\quad  
L = \begin{pmatrix} I_{p} & 0\\
B & I_{r-p}\end{pmatrix},
\end{equation}
which is a consequence of the Bruhat decomposition for the group $\mathrm{GL}(r,\CC)$ (see \cite[Lemma 2, Remarks 4 \& 8]{Men18}), so that the product $DL$ belongs to the stabilizer of the span of $\{\mathbf{e}_{r-p+1},\dots,\mathbf{e}_{r}\}$. 

\begin{definition}\label{def:regular-locus} 
The regular locus $\cN_{0}\subseteq \cN$ is the set of isomorphism classes of evenly-split stable parabolic bundles $\left\{E_{*}\right\}$ whose flags at $\infty$ project onto $\mathrm{Gr}^{0}_{r-p}(E_{\infty})$.  
\end{definition}

In particular, when $r\mid d$, the second condition is vacuous and $\cN_{0}$ is just the open Harder--Narasimhan stratum of evenly-split parabolic bundles. 
The next couple of results and their consequences justify our definition of the regular locus.

\begin{lemma}\label{regular-auto}
Let $\rho:\Gamma\rightarrow\mathrm{U}(r)$ be a fixed admissible irreducible representation with $\left\{E^{\rho}_{*}\right\}\in\cN_{0}$. Then $\frak{A}_{2}(\Ga, \Ad\rho)\cap\frak{M}_{2}(\Ga, \Ad\rho)$ consists of a unique element $\cA$ depending smoothly on moduli, whose constant terms at the cusps when $r\mid d$ are
\begin{equation}\label{eq:regular-Fourier-1}
B_{i}(0)=\left\{
\begin{array}{cl}
2\pi\sqrt{-1}W_{i} &\qquad i=1,\cdots,n-1,\\\\
2\pi\sqrt{-1}(W_{n} + N_{0}) &\qquad i=n.
\end{array}
\right.
\end{equation}
and when $r\centernot\mid d$, 
\begin{equation}\label{eq:regular-Fourier-2}
B_{i}(0)=\left\{
\begin{array}{cl}
2\pi\sqrt{-1}W_{i} &\qquad i=1,\cdots,n-1,\\\\
2\pi\sqrt{-1}\left(W_{n} + \Ad\left(\Pi_{0}L\right)^{-1}(N_{0})\right) &\qquad i=n.
\end{array}
\right.
\end{equation}
where
$$\qquad L = \begin{pmatrix} I_{p} & 0\\
B & I_{r-p}\end{pmatrix},$$
for some $(r-p)\times p$ matrix $B$, and $\Pi_{0}$ is the permutation matrix of the product of transpositions $(1,r)(2,r-1)\dots(\lfloor r/2\rfloor, r- \lfloor r/2\rfloor +1)$.
\end{lemma}
\begin{proof}
Recall that the flag at $\infty$ is equivalently determined by the ordered frame of the matrix $C_{n}(0)$. The case $r\mid d$ has already been described, and the formulas \eqref{eq:regular-Fourier-1} are an immediate consequence of the Fourier series expansions \eqref{Y-Fourier}--\eqref{Y-Fourier-2} in Lemma \ref{Theo-Upsilon} for any choice $Y\in\Upsilon(\rho)$.  
When $r\centernot\mid d$, it follows from \eqref{A-n} that $\cA(Y)\in \frak{M}_{2}(\Ga, \Ad\rho)$ if and only if the matrix $\Ad C_{n}(0)^{-1}(N_{0})$ is lower triangular.
Consider the factorization $C_{n}(0)=M\Pi_{0}DL$ described before.
Since $\Pi_{0}D\Pi_{0}^{-1}$ is block-diagonal of block type $(r-p,p)$, it commutes with $N_{0}$. Consequently, $\Ad C_{n}(0)^{-1}(N_{0})$ is lower triangular if and only if $M = I_{r}$, in which case it equals  $\Ad\left(\Pi_{0}L\right)^{-1}(N_{0})$. It follows from \eqref{action} that the subgroup $\mathrm{P}_{N_{0}}\subset \Aut E_{N_{0}}$ acts trivially on $\frak{A}_{2}(\Ga, \Ad\rho)$, while the subgroup $\mathrm{N}_{N_{0}}$ acts by the transformations
$$M\mapsto g(z)\cdot M = \begin{pmatrix} I_{r-p} & 0\\
A + C& I_{p}\end{pmatrix},\quad g(z)\in\mathrm{N}_{N_{0}}.$$
In particular, there is a unique $g(z)\in\mathrm{N}_{N_{0}}$ such that $M = I_{r}$.\footnote{It follows that $\mathrm{Gr}^{0}_{r-p}(E_{\infty})$ is a principal homogeneous space for the subgroup $\mathrm{N}_{N_{0}}$ \cite[Corollary 1]{Men18}, and the block $A$ provides coordinates for it. The smooth dependence of the Riemann-Hilbert correspondence on moduli of irreducible admissible representations implies that the normalization $A = 0$ on $\cN_{0}$ would also depend smoothly on moduli.}  Formulas \eqref{eq:regular-Fourier-2} then follow from \eqref{Y-Fourier}--\eqref{Y-Fourier-2} and \eqref{A-n}, and the corresponding regular automorphic form $\cA(Y)$ depends smoothly on moduli.
\end{proof}

\begin{definition}
Let $\vartheta$ denote the $(1,0)$-form on $\cN_{0}$ defined pointwise at any given $\left\{E^{\rho}_{*}\right\}\in\cN_{0}$ as $\vartheta=P(\cA)\in \frak{S}_{2}(\Ga, \Ad\rho)$, i.e.
\begin{equation} \label{Theta-form}
\vartheta_{E^{\rho}_{*}}(\nu)=\langle\mathscr{A},\nu^{*}\rangle=2\iint\limits_{D}\text{tr}(\mathscr{A}\nu)d^{2}\tau, \;\;\;\nu\in \overline{\frak{S}_{2}(\Ga, \Ad\rho)}.
\end{equation}
\end{definition}

\begin{lemma}\label{special-Y}
For any given $\{E^{\rho}_{*}\}\in\cN_{0}$, let $\cA$ be normalized as in Lemma \ref{regular-auto}. Then there is a unique representative $Y\in\Upsilon(\rho)$ satisfying $\cA(Y) = \cA$, and such that when $r\mid d$, the Fourier series expansions \eqref{Y-Fourier-2} take the form
\begin{equation}\label{eq:Y-1}
Y(\sigma_{n}\tau) = \left(I_{r} +\sum_{k=1}^{\infty}C_{n}(k)q^{k}\right)q^{-(W_{n}+N_{0})}U_{n}^{-1},
\end{equation}
and when $r\centernot\mid d$, 
\begin{equation}\label{eq:Y-2}
Y(\sigma_{n}\tau) = \left(\Pi_{0} +\sum_{k=1}^{\infty}C_{n}(k)q^{k}\right)q^{-(W_{n}+N'_{0})}U_{n}^{-1},
\end{equation}
where $N'_{0} = \Ad(\Pi_{0})^{-1}(N_{0})$.
\end{lemma}
\begin{proof}
When $r\mid d$, there is a unique normalization $C_{n}(0) = I$, and the result follows since $N_{0}$ is a multiple of the identity. 

Assume now that $r\centernot\mid d$. Using $q^{-N_{0}}=\Pi_{0}q^{-N'_{0}}\Pi_{0}^{-1}$, we rewrite the Fourier expansion \eqref{Y-Fourier-2} as
\begin{equation} \label{Y-Fourier-3} 
Y(\sigma_{n}\tau) =\sum_{k=0}^{\infty}\Pi_{0}q^{-N_{0}'}\Pi_{0}^{-1}C_{n}(k)q^{k}\;q^{-W_{n}}U_{n}^{-1}.
\end{equation}
Since $N'_{0}=\mathrm{diag}(\underbrace{m+1,\dots,m+1}_{p},\underbrace{m,\dots,m}_{r-p})$, we have 
\begin{equation}\label{formula-0}
q^{-N_{0}'} \begin{pmatrix} A & B\\
C & D\end{pmatrix}=\begin{pmatrix} A & q^{-1}B\\
qC & D\end{pmatrix}q^{-N_{0}'}
\end{equation}
for any block $(p,r-p)$ matrix $\begin{pmatrix} A & B\\
C & D\end{pmatrix}$. Using this formula, we can move $q^{-N_{0}'}$ to the right of the matrices $C_{n}(k)$ in \eqref{Y-Fourier-3} and get a new expansion
\begin{equation*} 
Y(\sigma_{n}\tau) =\sum_{k=0}^{\infty}C'_{n}(k)q^{k}\;q^{-(W_{n}+N_{0}')}U_{n}^{-1}.
\end{equation*}  
According to Lemma \ref{regular-auto}, an $\mathrm{N}_{N_{0}}$-orbit of $C_{n}(0)$ contains a unique representative $\Pi_{0}DL$, so that 
$$C'_{n}(0)=\Pi_{0}\begin{pmatrix} D_{p} & B'\\
0 & D_{r-p}\end{pmatrix},$$
where the $p\times(r- p)$ block $B'$ comes from the corresponding $p\times(r- p)$ block in the matrix $\Pi_{0}^{-1}C_{n}(1)$.
Since the remaining group of bundle automorphisms $\mathrm{P}_{N_{0}}\subset \Aut E_{N_{0}}$ consists of block $(r-p,p)$ matrices of the form
\[
g = \begin{pmatrix} D'_{r-p} & 0\\
C' & D'_{p}\end{pmatrix},
\]
it follows that there is a unique $g\in\mathrm{P}_{N_{0}}$ such that $C'_{n}(0)=\Pi_{0}$.
\end{proof}

\subsection{Complex coordinates and K\"{a}hler form} \label{Kahler}
Although we only consider the case of $\CC\PP^{1}$, the subsequent results are valid for any compact Riemann surface $X$ with Fuchsian model $X_{0} = X\setminus\{z_{1},\dots,z_{n}\}\cong \Gamma\setminus \HH$, and were developed in \cite{ZT89,TZ07}. Namely, let $\rho:\Gamma\rightarrow \mathrm{U}(r)$ be an admissible irreducible representation. We have the following result. 

\begin{proposition} \label{prop-def}
For each $\nu\in\overline{\mathfrak{S}_{2}(\Gamma,\text{\emph{Ad}}\,\rho)}$ and $\varepsilon\in\mathbb{C}$ sufficiently close to $0$, there is a unique solution $f^{\varepsilon\nu}:\mathbb{H}\rightarrow\text{\emph{GL}}(r,\mathbb{C})$ of the differential equation
\begin{equation}\label{quasiconformal}
f^{-1} f_{\overline{\tau}}=\varepsilon\nu
\end{equation}
with the following properties.
\renewcommand{\theenumi}{\roman{enumi}}
\begin{enumerate}
\item $f^{\varepsilon\nu}(\gamma\tau)=\rho^{\varepsilon\nu}(\gamma)f(\tau)\rho(\gamma)^{-1}\quad \forall\gamma\in\Gamma$, where $\rho^{\varepsilon\nu}:\Gamma\rightarrow\text{\emph{U}}(r)$ is an admissible irreducible representation;
\item $\det f^{\varepsilon\nu}(\tau_{0})=1$ for some fixed $\tau_{0}\in\mathbb{H}$;
\item $f^{\varepsilon\nu}$ is regular at the cusps, that is
\[
\lim_{\tau\rightarrow \tau_{i}}f^{\varepsilon\nu}(\tau)<\infty,\qquad i=1,\dots,n.
\]
\end{enumerate}
\renewcommand{\theenumi}{\arabic{enumi}}
\end{proposition}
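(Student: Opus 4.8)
The plan is to treat equation \eqref{quasiconformal} as a non-abelian Beltrami equation. Writing it as $f_{\bar\tau}=\varepsilon\,f\nu$, where $\nu\in\overline{\mathfrak{S}_{2}(\Gamma,\Ad\rho)}$ is an $\Ad\rho$-equivariant $(0,1)$-form satisfying $\nu(\gamma\tau)\,\overline{\gamma'(\tau)}=\Ad\rho(\gamma)\,\nu(\tau)$, this is an inhomogeneous $\delb$-equation whose inhomogeneity depends linearly on the unknown. For existence I would solve it by the Neumann series $f^{\varepsilon\nu}=I+\sum_{k\geq1}\varepsilon^{k}f_{k}$, where $\delb f_{k}=f_{k-1}\nu$ is solved by the Cauchy--Green operator attached to the finite-type surface $X=\Gamma\backslash\HH$ and the bundle $\End E_{0}$. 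Because $\nu$ is a \emph{cusp} form it is bounded and decays at every cusp, so this operator is bounded on a suitable space of matrix functions and the series converges for $|\varepsilon|$ small; this also gives holomorphic dependence on $\varepsilon$ and $f^{0}=I$. Condition (ii) I would impose through the determinant: from $f_{\bar\tau}=\varepsilon f\nu$ one gets $(\log\det f)_{\bar\tau}=\varepsilon\,\tr\nu$, and since $\tr\nu=0$ (it is conjugate to the trace of a weight-$2$ cusp form, which is a holomorphic abelian differential on $\PP^{1}$ and hence vanishes), $\det f$ is holomorphic; rescaling by a constant makes $\det f^{\varepsilon\nu}(\tau_{0})=1$.

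The deformed representation and property (i) come from the $\Ad\rho$-equivariance of $\nu$. A direct computation shows that $f(\gamma\tau)\rho(\gamma)$ satisfies the \emph{same} equation as $f$, namely $\bigl(f(\gamma\cdot)\rho(\gamma)\bigr)_{\bar\tau}=\varepsilon\,f(\gamma\tau)\rho(\gamma)\nu$; since any two solutions of $u_{\bar\tau}=\varepsilon u\nu$ differ by a holomorphic factor on the left, the matrix $\rho^{\varepsilon\nu}(\gamma)(\tau)=f(\gamma\tau)\rho(\gamma)f(\tau)^{-1}$ is holomorphic in $\tau$ and yields exactly $f(\gamma\tau)=\rho^{\varepsilon\nu}(\gamma)f(\tau)\rho(\gamma)^{-1}$. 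I would then show $\rho^{\varepsilon\nu}(\gamma)$ is independent of $\tau$ --- using the cusp regularity below to extend it to a bounded holomorphic matrix function on the compactified quotient --- so that $\gamma\mapsto\rho^{\varepsilon\nu}(\gamma)$ is a genuine homomorphism. For small $|\varepsilon|$ it stays close to $\rho$ and hence irreducible; because $\nu$ is a cusp form it does not perturb the eigenvalues of the local monodromies $\rho(S_{i})$, so the weights $W_{i}$ are unchanged and $\rho^{\varepsilon\nu}$ is again admissible. Finally I would verify that $\rho^{\varepsilon\nu}$ can be taken unitary, completing (i).

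Property (iii) I would establish by a local analysis near each cusp: conjugating by $\sigma_{i}$ and passing to $q=e^{2\pi\sqrt{-1}\tau}$, the cusp-form hypothesis (vanishing of the diagonal of $B_{i}(0)$) makes $\nu$ decay as $\tau\to\tau_{i}$, so the Cauchy--Green integral defining $f$ has a finite limit in the appropriate local frame, i.e. $\lim_{\tau\to\tau_{i}}f^{\varepsilon\nu}(\tau)<\infty$. Uniqueness is then formal: if $f_{1},f_{2}$ both satisfy (i)--(iii), then $h=f_{1}f_{2}^{-1}$ is holomorphic and $\Ad\rho^{\varepsilon\nu}$-equivariant and regular at the cusps, hence defines a global holomorphic endomorphism of the stable parabolic bundle $E^{\rho^{\varepsilon\nu}}_{*}$; by simplicity of stable bundles $h=cI$, and the normalizations $\det f_{1}(\tau_{0})=\det f_{2}(\tau_{0})=1$ together with $h$ being close to $I$ force $c=1$, so $f_{1}=f_{2}$.

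The main obstacle is the middle step: showing that $\rho^{\varepsilon\nu}$ is an \emph{honest} representation --- constant in $\tau$, unitary, irreducible and admissible --- together with the global regularity (iii) at the cusps. This is precisely where the hypothesis $\nu\in\overline{\mathfrak{S}_{2}(\Gamma,\Ad\rho)}$ (a cusp form, not merely a regular automorphic form) is indispensable, since it is the cuspidal decay that both keeps the Cauchy--Green construction bounded and prevents the local monodromies from moving; for the detailed estimates I would follow the arguments of \cite{ZT89,TZ07}.
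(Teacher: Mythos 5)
First, note that the paper does not actually prove Proposition \ref{prop-def}: it is quoted from \cite{ZT89,TZ07}, with Remark \ref{Teich} only adding that the normalization of $\rho^{\varepsilon\nu}$ is achieved by a left multiplication by a suitable special unitary matrix. So the comparison has to be against what those references do and against the properties of $f^{\varepsilon\nu}$ that the rest of the paper relies on.

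Measured that way, your construction has a genuine gap, and it is concentrated exactly where you postpone the work to a single sentence (``Finally I would verify that $\rho^{\varepsilon\nu}$ can be taken unitary''). The Neumann series $f=I+\sum_{k\ge1}\varepsilon^k f_k$ with $\bar\partial f_k=f_{k-1}\nu$ does solve the PDE and does produce a holomorphic family of deformed monodromies $\gamma\mapsto f(\gamma\tau)\rho(\gamma)f(\tau)^{-1}$, but these land in $\GL(r,\CC)$, not in $\mathrm{U}(r)$: already at first order, $\rho^{\varepsilon\nu}(\gamma)=\rho(\gamma)+\varepsilon\bigl(f_1(\gamma\tau)\rho(\gamma)-\rho(\gamma)f_1(\tau)\bigr)+O(\varepsilon^2)$, and there is no reason for the first-order term to be anti-Hermitian after multiplying by $\rho(\gamma)^{-1}$. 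This is the non-abelian analogue of the Fuchsian-versus-quasi-Fuchsian dichotomy in \Te theory: the holomorphic (Bers-type) solution deforms $\rho$ out of the unitary locus, and restoring unitarity requires correcting $f$ on the left by a $\tau$-holomorphic factor whose existence amounts to stability of the deformed parabolic bundle plus the Mehta--Seshadri theorem (equivalently, an implicit-function-theorem argument on $\Hom(\Gamma,\mathrm{U}(r))^{0}/\mathrm{U}(r)$, which is how \cite{ZT89,TZ07} proceed by deforming the flat unitary connection with both $\nu$ and $\nu^{*}$). That correction depends on $\bar\varepsilon$, so your claim of holomorphic dependence on $\varepsilon$ cannot be right: it directly contradicts Lemma \ref{Ahlfors} of the paper, where $(\dot f_-)_\tau=-\nu^{*}\neq 0$, a formula the proofs of Theorems \ref{main-theo1} and \ref{main-theo2} depend on. In short, the solution you construct exists but is not the $f^{\varepsilon\nu}$ of the Proposition, because it fails property (i).

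Two smaller points. The assertion that ``because $\nu$ is a cusp form it does not perturb the eigenvalues of the local monodromies'' is stated without argument; admissibility of $\rho^{\varepsilon\nu}$ with the \emph{same} weights $W_i$ is part of the content of (i) and in the references it is built into the construction (the deformation is performed within the space of admissible representations with fixed local conjugacy classes), not deduced afterwards from cuspidality. And in the uniqueness step, $\det f_1(\tau_0)=\det f_2(\tau_0)=1$ only forces $c^{r}=1$ for the scalar $c$ with $f_1=cf_2$; you pin down $c=1$ by ``$h$ close to $I$'', which is not one of the hypotheses (i)--(iii), so either continuity in $\varepsilon$ must be added to the statement being proved or the normalization must be used more carefully. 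Your observations that $\tr\nu=0$ (hence $\det f$ holomorphic) and that $f(\gamma\tau)\rho(\gamma)$ solves the same equation are correct and are the right ingredients for (ii) and the equivariance part of (i).
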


$f^{\varepsilon\nu}$ is real analytic in $\varepsilon$, and  is analogous to a corresponding quasiconformal mapping in Teichm\"{u}ller theory (see \cite{ZT87a}). Under the special choice of bundle uniformization maps $Y_{\varepsilon\nu}$ following from Lemma \ref{special-Y}, it defines a parabolic bundle map $F^{\varepsilon\nu}:=\left(Y_{\varepsilon\nu} f^{\varepsilon\nu} Y^{-1}\right)\circ J^{-1}$ by requiring the commutativity of the diagram of parabolic bundles
\begin{equation}\label{com-diagram}
\xymatrix{
\mathbb{H}^{*}\times\mathbb{C}^r_{*} \ar[r]^{f^{\varepsilon\nu}} \ar[d]_{\mathscr{J}}& \mathbb{H}^{*}\times\mathbb{C}^r_{*} \ar[d]^{\mathscr{J}_{\varepsilon\nu}}\\
E^{\rho}_{*} \ar[r]^{F^{\varepsilon\nu}}  & E^{\rho^{\varepsilon\nu}}_{*}
} 
\end{equation}
It follows from Lemma \ref{special-Y} that $\det F^{\varepsilon\nu}(J(\tau_{0})) = 1$, since $(\det Y_{\varepsilon\nu}/\det Y)\circ J$ is a holomorphic function on $\CC\PP^{1}$ whose value at $\infty$ is equal to 1.

Given a basis $\nu_{1},\dots,\nu_{d}$ for $\cH^{0,1}(X,\End E_{0}^{\rho})$, let $\nu=\vep_{1}\nu_{1}+\dots+\vep_{d}\nu_{d}$, with $\vep_{i}\in\CC,\; i=1,\dots,d,$ be sufficiently small.
The induced mapping $(\vep_{1},\dots,\vep_{d})\mapsto \{E^{\rho^{\nu}}_{*}\}$ provides a coordinate chart on $\mathscr{N}$ in the neighborhood of the point $\{E^{\rho}_{*}\}$.
These coordinates transform holomorphically and endow $\mathscr{N}$ with the structure of a complex manifold (in direct analogy to Bers' coordinates on Teichm\"{u}ller spaces). The differential of such coordinate transformations is the linear mapping $\cH^{0,1}(X,\End E_{0}^{\rho})\rightarrow\cH^{0,1}\left(X,\End E_{0}^{\rho^{\nu}}\right)$ explicitly given by the formula
\begin{equation} \label{differential}
\mu\mapsto P_{\nu}(\Ad f^{\nu}(\mu)),\quad\mu\in\cH^{0,1}(X,\End E_{0}^{\rho}).
\end{equation}
Here $P_{\nu}$ is the orthogonal projection onto $\cH^{0,1}\left(X,\End E_{0}^{\rho^{\nu}}\right)$, while $\Ad f^{\nu}(\mu):=f^{\nu}\mu (f^{\nu})^{-1}$ is understood as a fiberwise linear mapping $\End E_{0}^{\rho}\rightarrow\End E_{0}^{\rho^{\nu}}$ (see \cite{TZ07} for details). The moduli space $\cN$ carries a Hermitian metric given by the Petersson inner product and the isomorphism $T_{\{E^{\rho}_{*}\}}\cN
\simeq\overline{\mathfrak{S}_{2}(\Gamma,\Ad\rho)}$.
This metric is analogous to the Weil-Petersson metric on Teichm\"{u}ller space, and for the moduli spaces of stable bundles of fixed rank and degree was introduced in \cite{Nar70, AB83}. This metric is K\"{a}hler \cite{TZ07} and we denote its K\"{a}hler form by $\Omega$:
$$\Omega\left(\frac{\pa}{\pa \vep(\mu)},\frac{\pa}{\pa\overline{ \vep(\nu)}}\right)=\frac{\sqrt{-1}}{2}\langle\mu,\nu\rangle.$$
Here $\displaystyle{\frac{\pa}{\pa \vep(\mu)}}$ and $\displaystyle{\frac{\pa}{\pa\overline{ \vep(\nu)}}}$ are the holomorphic and antiholomorphic tangent vectors at  $\{E_{*}\}\in\cN$ corresponding to $\mu,\nu\in\cH^{0,1}(X,\End E_{0})$ respectively.

\subsection{Variational formulas} \label{Var}
Here we collect the necessary variational formulas. Except for Lemma \ref{var-F}, these formulas are proved in \cite{ZT89,TZ07}. For $\nu\in T_{\{E^{\rho}_{*}\}}\cN
\simeq\overline{\mathfrak{S}_{2}(\Gamma,\Ad\rho)}$ put $$\dot{f}_{+}=\left.\frac{\partial f^{\varepsilon\nu}}{\partial\varepsilon}\right|_{\varepsilon=0}\quad\text{and}
\quad\dot{f}_{-}=\left.\frac{\partial f^{\varepsilon\nu}}{\partial\bar{\varepsilon}}\right|_{\varepsilon=0}.$$ 
\begin{lemma}[Vanishing of the first variation of the Hermitian metric]\label{Ahlfors}
For $\nu\in\overline{\mathfrak{S}_{2}(\Gamma,\Ad\rho)}$ we have
\begin{equation} \label{h-first}
\left.\frac{\partial}{\partial\varepsilon}\left((f^{\varepsilon\nu})^{*}f^{\varepsilon\nu}\right)\right|_{\varepsilon=0}
=\left.\frac{\partial}{\partial\bar{\varepsilon}}\left((f^{\varepsilon\nu})^{*}f^{\varepsilon\nu}\right)\right|_{\varepsilon=0}=0
\end{equation}
and also
\begin{equation}\label{var-formulas}
\left(\dot{f}_{+}\right)_{\bar{\tau}}=\nu,\qquad
\left(\dot{f}_{-}\right)_{\bar{\tau}}=0,
\end{equation}
\begin{equation} \label{var-formulas-2}
\quad\,\left(\dot{f}_{+}\right)_{\tau}=0,\qquad \left( \dot{f}_{-}\right)_{\tau}=-\nu^{*}.
\end{equation}
\end{lemma}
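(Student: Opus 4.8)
The plan is to dispatch the three assertions in a definite order: \eqref{var-formulas} is immediate from the defining equation, the identities \eqref{h-first} and \eqref{var-formulas-2} will be collapsed into a single vanishing statement, and that statement will then be proved by a subharmonicity argument, which is where the real content lies. First I would identify the base point. At $\varepsilon=0$ equation \eqref{quasiconformal} forces $f^{0}$ to be holomorphic, $\Ad\rho$-equivariant, regular at the cusps and to satisfy $\det f^{0}(\tau_{0})=1$; the constant matrix $I$ enjoys all four properties, so by the uniqueness in Proposition \ref{prop-def} one has $f^{0}=I$. Rewriting \eqref{quasiconformal} as $f^{\varepsilon\nu}_{\bar\tau}=\varepsilon f^{\varepsilon\nu}\nu$ and differentiating in $\varepsilon$ and in $\bar\varepsilon$ at $\varepsilon=0$, using $f^{0}=I$, yields $(\dot f_{+})_{\bar\tau}=\nu$ and $(\dot f_{-})_{\bar\tau}=0$, which is exactly \eqref{var-formulas}.

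Next I would reduce the remaining two displays to one. Set $\Phi=\partial_{\varepsilon}\big((f^{\varepsilon\nu})^{*}f^{\varepsilon\nu}\big)\big|_{0}$. Since $\partial_{\varepsilon}(f^{*})\big|_{0}=(\dot f_{-})^{*}$, we get $\Phi=\dot f_{+}+(\dot f_{-})^{*}$ and likewise $\partial_{\bar\varepsilon}\big((f^{\varepsilon\nu})^{*}f^{\varepsilon\nu}\big)\big|_{0}=\Phi^{*}$, so \eqref{h-first} is precisely the claim $\Phi\equiv 0$. Because $\rho$ and every $\rho^{\varepsilon\nu}$ are unitary, the metric $(f^{\varepsilon\nu})^{*}f^{\varepsilon\nu}$ is $\Ad\rho$-equivariant of weight $0$, whence $\Phi(\gamma\tau)=\Ad\rho(\gamma)\Phi(\tau)$. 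Differentiating $\Phi$ and invoking \eqref{var-formulas} gives $\Phi_{\tau}=(\dot f_{+})_{\tau}$ and $\Phi_{\bar\tau}=\nu+((\dot f_{-})_{\tau})^{*}$; moreover $\nu\in\overline{\frak{S}_{2}(\Ga,\Ad\rho)}$ is anti-holomorphic, so $\Phi_{\tau}$ is holomorphic and $\Phi_{\bar\tau}$ anti-holomorphic. Hence $\Phi\equiv 0$ is equivalent to \eqref{h-first} together with \eqref{var-formulas-2}: vanishing of $\Phi$ gives \eqref{h-first} at once, while reading off $\Phi_{\tau}=0$ and $\Phi_{\bar\tau}=0$ gives $(\dot f_{+})_{\tau}=0$ and $(\dot f_{-})_{\tau}=-\nu^{*}$.

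The crux is therefore to prove $\Phi\equiv 0$. I would consider the nonnegative $\Gamma$-invariant function $u=\tr(\Phi\Phi^{*})$, which descends to $X$. Since $\Phi_{\tau}$ is holomorphic and $\Phi_{\bar\tau}$ anti-holomorphic, $\Phi_{\tau\bar\tau}=0$, and a direct computation gives
\[
u_{\tau\bar\tau}=\tr(\Phi_{\tau}\Phi_{\tau}^{*})+\tr(\Phi_{\bar\tau}\Phi_{\bar\tau}^{*})\geq 0,
\]
so $u$ is subharmonic on $X$. By property (iii) the functions $f^{\varepsilon\nu}$, hence $\dot f_{\pm}$ and $\Phi$, have finite limits at every cusp, so $u$ extends continuously to $\PP^{1}$ and is bounded. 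As $X$ has finite hyperbolic area it admits no non-constant bounded subharmonic function, so $u$ is constant and $u_{\tau\bar\tau}\equiv 0$, forcing $\Phi_{\tau}=\Phi_{\bar\tau}=0$. Then $\Phi$ is a constant matrix commuting with $\rho(\Gamma)$; irreducibility of $\rho$ and Schur's lemma give $\Phi=\lambda I$, and since $\det\big((f^{\varepsilon\nu})^{*}f^{\varepsilon\nu}\big)(\tau_{0})=|\det f^{\varepsilon\nu}(\tau_{0})|^{2}=1$ is constant in $\varepsilon$, differentiation yields $\tr\Phi(\tau_{0})=\lambda r=0$, so $\lambda=0$ and $\Phi\equiv 0$.

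The main obstacle is the passage from subharmonicity to constancy of $u$: it requires the boundedness of $u$, which rests squarely on the cusp-regularity (iii) of $f^{\varepsilon\nu}$, and it requires knowing that the finite-area surface $X$ carries no non-constant bounded subharmonic function. The technical point beneath the boundedness is that differentiation in $\varepsilon$ must commute with the cusp limits, i.e.\ one needs uniform control of $f^{\varepsilon\nu}$ near the cusps in $\varepsilon$. As an alternative to the potential-theoretic step one can integrate $u_{\tau\bar\tau}$ over a fundamental domain truncated by horocycles: the $\Gamma$-paired sides cancel by invariance of $u$, and the horocyclic boundary integrals are shown to vanish in the limit using that $\nu$ is a \emph{cusp} form, so it decays exponentially, together with the explicit expansions \eqref{F-Fourier}; this is the computation carried out in \cite{ZT89,TZ07}.
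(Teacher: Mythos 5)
The paper does not actually prove this lemma: it is stated with the attribution ``The next statement was proved in \cite{ZT89,TZ07}'', so there is no in-paper argument to compare yours against, and your proposal has to be judged on its own terms. Its architecture is sound and is the classical Ahlfors-type argument: $f^{0}=I$ follows from the uniqueness in Proposition \ref{prop-def}; differentiating $f_{\bar{\tau}}=\varepsilon f\nu$ in $\varepsilon$ and $\bar{\varepsilon}$ at $\varepsilon=0$ gives \eqref{var-formulas}; and the reduction of \eqref{h-first} together with \eqref{var-formulas-2} to the single statement $\Phi=\dot{f}_{+}+(\dot{f}_{-})^{*}\equiv 0$ is clean and correct. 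So are the equivariance $\Phi(\gamma\tau)=\Ad\rho(\gamma)\Phi(\tau)$ (the cocycle terms coming from $\dot{\rho}$ cancel precisely because $\rho^{\varepsilon\nu}$ is unitary), the computation $u_{\tau\bar{\tau}}=\tr(\Phi_{\tau}\Phi_{\tau}^{*})+\tr(\Phi_{\bar{\tau}}\Phi_{\bar{\tau}}^{*})\geq 0$, and the endgame via Schur's lemma and the normalization $\det f^{\varepsilon\nu}(\tau_{0})=1$.

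The one place where the argument is not self-contained is the one you flag yourself: the boundedness of $u$ near the cusps. Property (iii) of Proposition \ref{prop-def} controls $f^{\varepsilon\nu}$ at the cusps for each fixed $\varepsilon$ but says nothing about $\dot{f}_{\pm}$; to conclude that $\Phi$ is bounded you need the cusp expansions of $f^{\varepsilon\nu}$ (the analogue of \eqref{factor}) to depend real-analytically on $\varepsilon$ in a way that permits term-by-term differentiation, i.e.\ differentiability of $\varepsilon\mapsto f^{\varepsilon\nu}$ into a space of functions bounded up to the cusps. That is exactly what the explicit construction in \cite{ZT89,TZ07} supplies, and without it neither the subharmonic-extension step nor the horocycle-truncation alternative you sketch (whose boundary integrands also involve $\dot{f}_{\pm}$) closes. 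So the proposal is a correct reduction of the lemma to that single analytic input, which you identify precisely and attribute correctly; a referee would only ask that this input be proved or explicitly cited at the point where the boundedness of $u$ is invoked.
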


Let $\cC\to \cN$ be the holomorphic affine bundle modeled on $T^{*}\cN$, whose fiber over a given $\{E_{*}\}\in\cN$ corresponds to the affine space $\cC(E_{*})$. There is a $\bar{\partial}$-operator on the space of smooth sections $\Gamma\left(\cN,\cC\right)$ and taking values in $\Omega^{1,1}(\cN)$, defining the notion of holomorphicity of a global section. The theorem of Mehta-Seshadri provides a special section $s_{\mathrm{MS}}$, which we call the \emph{Mehta--Seshadri section}, given by logarithmic connections with irreducible unitary monodromy. The section $s_{\mathrm{MS}}$ is not holomorphic. Its non-holomorphicity is measured by the Narasimhan--Atiyah--Bott K\"ahler form. The next proposition is an analog of  \cite[Theorem 1]{ZT89} for parabolic bundles. As in \cite{ZT89}, its proof can be deduced from the variational formulas \eqref{var-formulas-2}. 

\begin{proposition}\label{prop:MS-section}
For any moduli space $\cN$ of stable parabolic bundles, the Mehta--Seshadri section $s_{\mathrm{MS}}: \cN \to \cC$ satisfies 
\[
\bar{\partial} s_{\mathrm{MS}} = - 2\Omega.
\]
\end{proposition}

For a given $\{E^{\rho}_{*}\}\in\cN_{0}$, $\nu\in\overline{\mathfrak{S}_{2}(\Gamma,\text{Ad}\,\rho)}$, and $\varepsilon$ sufficiently small, consider 
the families of normalized maps $Y_{\varepsilon\nu}$ as in Lemma \ref{special-Y}, and the parabolic bundle maps $F^{\varepsilon\nu}: E^{\rho}_{*}\rightarrow E^{\rho^{\varepsilon\nu}}_{*}$. 
Put $$\dot{Y}_{+}=\left.\frac{\partial Y_{\varepsilon\nu}}{\partial\varepsilon}\right|_{\varepsilon=0},\quad \dot{Y}_{-}=\left.\frac{\partial Y_{\varepsilon\nu}}{\partial\bar{\varepsilon}}\right|_{\varepsilon=0}
,$$ and 
\begin{equation}\label{Eichler+-}
\displaystyle\mathscr{E}_{+}=Y^{-1}\dot{Y}_{+},\qquad 
\displaystyle\mathscr{E}_{-}=Y^{-1}\dot{Y}_{-}.
\end{equation}
Denote by $h^{\varepsilon\nu}= (\mathscr{Y}_{\varepsilon\nu}\mathscr{Y}_{\varepsilon\nu}^{*})^{-1}$ the local family of Hermitian metrics on the bundles $E^{\rho^{\varepsilon\nu}}$, where $\mathscr{Y}_{\varepsilon\nu}=Y_{\varepsilon\nu}\circ J^{-1}$.

\begin{lemma}\label{var-F} Over the regular locus $\mathscr{N}_{0}$, we have 
\begin{equation} \label{F-var}
F^{\varepsilon\nu}=I_{r}+\varepsilon\dot{F}+o(|\varepsilon|),
\end{equation}
where
\begin{equation}\label{h-F-1}
\dot{F}=-h^{-1}\dot{h}_{+} = -h^{-1}\left.\frac{\partial h^{\varepsilon\nu}}{\partial\varepsilon}\right|_{\varepsilon=0},
\end{equation}
is a smooth endomorphism of $E_{N_{0}}$ satisfying 
\begin{equation}\label{relation-F-nu}
(\dot{F}\circ J)_{\bar{\tau}}=Y\nu Y^{-1}.
\end{equation}
\end{lemma}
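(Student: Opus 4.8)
The plan is to make the bundle map $F^{\varepsilon\nu}$ explicit and let one formula drive everything. Chasing the commutative diagram \eqref{com-diagram}, with $\cJ(\tau,v)=(J(\tau),Y(\tau)v)$ and $\cJ_{\varepsilon\nu}(\tau,v)=(J(\tau),Y_{\varepsilon\nu}(\tau)v)$, one reads off $F^{\varepsilon\nu}=\cJ_{\varepsilon\nu}\circ f^{\varepsilon\nu}\circ\cJ^{-1}$, so that in the trivialization of $E^{\rho}$ over $X$
\[
F^{\varepsilon\nu}\circ J=Y_{\varepsilon\nu}\,f^{\varepsilon\nu}\,Y^{-1}.
\]
Using \eqref{auto-Y} for $\rho^{\varepsilon\nu}$, property (i) of Proposition \ref{prop-def}, and $Y(\s\tau)^{-1}=\rho(\s)Y(\tau)^{-1}$, the right-hand side is $\Ga$-invariant, hence descends to a matrix-valued function on $X$; at $\varepsilon=0$ it equals $I$ since $f^{0}=I$ and $Y_{0}=Y$. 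This already yields \eqref{relation-F-nu}: because $Y_{\varepsilon\nu}$, $Y$ are holomorphic and $f^{\varepsilon\nu}$ solves \eqref{quasiconformal}, i.e. $f^{\varepsilon\nu}_{\bar\tau}=f^{\varepsilon\nu}\varepsilon\nu$, one has $(F^{\varepsilon\nu}\circ J)_{\bar\tau}=Y_{\varepsilon\nu}f^{\varepsilon\nu}\varepsilon\nu\,Y^{-1}$, and $\partial_{\varepsilon}|_{\varepsilon=0}$ gives $(\dot F\circ J)_{\bar\tau}=Y\nu Y^{-1}$.

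Next I reduce \eqref{F-var} and \eqref{h-F-1} to a single scalar identity. Writing $\dot F=\partial_{\varepsilon}F|_{0}$, $\dot F_{-}=\partial_{\bar\varepsilon}F|_{0}$ and differentiating $F\circ J=Y_{\varepsilon\nu}f^{\varepsilon\nu}Y^{-1}$, the definitions \eqref{Eichler+-} give $\dot F\circ J=Y(\mathscr{E}_{+}+\dot f_{+})Y^{-1}$ and $\dot F_{-}\circ J=Y(\mathscr{E}_{-}+\dot f_{-})Y^{-1}$. On the other hand, from $h=(\cY\cY^{*})^{-1}$ (see \eqref{h}) a direct computation of $\dot h_{+}=\partial_{\varepsilon}h^{\varepsilon\nu}|_{0}$ yields $-h^{-1}\dot h_{+}\circ J=Y(\mathscr{E}_{+}+\mathscr{E}_{-}^{*})Y^{-1}$, the Hermitian-conjugate term arising because $\partial_{\varepsilon}(\cY^{*})|_{0}=\dot{\cY}_{-}^{*}$ and $\dot{Y}_{-}^{*}(Y^{*})^{-1}=\mathscr{E}_{-}^{*}$. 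Thus \eqref{h-F-1} is equivalent to $\dot f_{+}=\mathscr{E}_{-}^{*}$ and \eqref{F-var} is equivalent to $\dot F_{-}=0$, i.e. to $\Phi:=\mathscr{E}_{-}+\dot f_{-}=0$. Finally \eqref{h-first} of Lemma \ref{Ahlfors} expands to $\dot f_{+}+\dot f_{-}^{*}=0$; combined with $\Phi=0$ this turns $\dot f_{+}=-\dot f_{-}^{*}$ into $\dot f_{+}=\mathscr{E}_{-}^{*}$. Hence both statements follow once I establish $\Phi=0$.

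For the crux $\Phi=0$ the key trick is to form $H^{\varepsilon\nu}:=Y^{-1}Y_{\varepsilon\nu}f^{\varepsilon\nu}$. The deformed representation cancels between $Y_{\varepsilon\nu}$ and $f^{\varepsilon\nu}$, so that $H^{\varepsilon\nu}(\s\tau)=\Ad\rho(\s)H^{\varepsilon\nu}(\tau)$ for \emph{every} $\varepsilon$, with no cocycle term, while $H^{\varepsilon\nu}_{\bar\tau}=H^{\varepsilon\nu}\varepsilon\nu$ and $H^{0}=I$. Differentiating in $\bar\varepsilon$ and using $\partial_{\bar\varepsilon}(\varepsilon)=0$ shows that $\Phi=\partial_{\bar\varepsilon}H|_{0}=\mathscr{E}_{-}+\dot f_{-}$ is holomorphic and weight $0$ with representation $\Ad\rho$; consequently $\dot F_{-}=(Y\Phi Y^{-1})\circ J^{-1}$ is a holomorphic endomorphism of $E^{\rho}|_{X}$. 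Near each marked point $H^{\varepsilon\nu}$ is the product of the $q$-expansion \eqref{Y-Fourier}--\eqref{canon-R} of $Y^{-1}Y_{\varepsilon\nu}$ with $f^{\varepsilon\nu}$, which is regular there by property (iii) of Proposition \ref{prop-def}; hence $\Phi$, and therefore $\dot F_{-}$, extends to a parabolic endomorphism of $E^{\rho}_{*}$. Since $\rho$ is irreducible, $E^{\rho}_{*}$ is simple and every such endomorphism is a scalar (cf. the argument in the proof of Lemma \ref{Theo-Upsilon}), so $\dot F_{-}=cI$. The normalization $\det F^{\varepsilon\nu}(J(\tau_{0}))=1$ from Remark \ref{Teich} forces $\tr\dot F_{-}=rc=0$, whence $c=0$ and $\Phi=0$.

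I expect the genuine difficulty to be the cusp analysis: verifying that $\dot F_{-}$ extends not merely as a holomorphic endomorphism on the punctured surface but as a \emph{parabolic} (flag-preserving, regular) endomorphism of $E^{\rho}_{*}$, which is exactly what licenses the appeal to simplicity. The interior algebra, the reduction to $\Phi=0$, and the determinant normalization that removes the residual scalar are all formal; the regularity at $z_{1},\dots,z_{n}$, where one must track the interplay of the canonical form of Lemma \ref{permutation-Y} with the boundedness of $f^{\varepsilon\nu}$, is the step requiring care.
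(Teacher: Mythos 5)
Your proof is correct and follows essentially the same route as the paper: both start from the intertwining relation $Y_{\varepsilon\nu}f^{\varepsilon\nu}=(F^{\varepsilon\nu}\circ J)Y$, prove $\dot F_{-}=0$ by showing $\mathscr{E}_{-}+\dot f_{-}$ is a holomorphic, $\Ad\rho$-automorphic, cusp-regular (hence parabolic, hence scalar by irreducibility) endomorphism killed by the determinant normalization of Remark \ref{Teich}, and obtain \eqref{h-F-1} from Lemma \ref{Ahlfors}. The only cosmetic differences are your auxiliary function $H^{\varepsilon\nu}=Y^{-1}Y_{\varepsilon\nu}f^{\varepsilon\nu}$ and your direct computation of $\dot h_{+}$ from $h=(\cY\cY^{*})^{-1}$, where the paper instead differentiates the pulled-back metric identity $(Y^{-1})^{*}(f^{\varepsilon\nu})^{*}f^{\varepsilon\nu}Y^{-1}=((F^{\varepsilon\nu})^{*}h^{\varepsilon\nu}F^{\varepsilon\nu})\circ J$.
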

\begin{proof} Consider the equation
\begin{equation*}
Y_{\varepsilon\nu}\cdot f^{\varepsilon\nu}=(F^{\varepsilon\nu}\circ J)\cdot Y,
\end{equation*}
which follows from \eqref{com-diagram}, and differentiate it with respect to $\varepsilon$ and $\bar{\varepsilon}$ at $\varepsilon=0$. We obtain
\begin{align}\label{rel-Eichler-f1}
Y\left(\mathscr{E}_{+}+\dot{f}_{+}\right) Y^{-1}&=\dot{F}_{+}\circ J,\\
\intertext{and}
Y\left(\mathscr{E}_{-}+\dot{f}_{-}\right) Y^{-1}&=\dot{F}_{-}\circ J. \label{rel-Eichler-f2}
\end{align}
It follows from \eqref{var-formulas} that the left hand side of \eqref{rel-Eichler-f2} is holomorphic in $\tau$.  Moreover, the function $\mathscr{E}_{-}+\dot{f}_{-}=Y^{-1}(\dot{F}_{-}\circ J)Y$ is $\mathrm{Ad}\,\rho$-automorphic and is bounded at the cusps, i.e., is a parabolic endomorphism of $E^{\rho}_{*}$. Therefore, $\cE_{-}+\dot{f}_{-}=cI=\dot{F}_{-}$ and it folows from the normalization of $F^{\varepsilon\nu}$ that indeed $\dot{F}_{-}=0$, which gives \eqref{F-var} with $\dot{F}=\dot{F}_{+}$.
 
To prove \eqref{h-F-1}, consider the equation
$$(Y^{-1})^{*}(f^{\varepsilon\nu})^{*}f^{\varepsilon\nu}
Y^{-1}=\left((F^{\varepsilon\nu})^{*}h^{\varepsilon\nu}F^{\varepsilon\nu}\right)\circ J,$$
differentiate it with respect to $\varepsilon$ at $\varepsilon=0$ and use Lemma \ref{Ahlfors}.
Finally, \eqref{relation-F-nu} immediately follows from \eqref{rel-Eichler-f1} and  \eqref{var-formulas}.
\end{proof}

For each pair of harmonic forms $\mu,\nu\in \cH^{0,1}\left(X_{0},\End E^{\rho}_{0}\right)$ we define a smooth $L^{2}$-section $f_{\mu\overline{\nu}}$ of $\End E^{\rho}_{0}$
 by the formula
\begin{equation}\label{eq:def-f-mu-nu}
f_{\mu\overline{\nu}} = \Delta_{0}^{-1} \left(* [*\mu,\nu]\right),
\end{equation}
where $\Delta_{0}$ is the restriction of the Laplace operator $\Delta$ to the orthogonal complement of the identity endomorphism (the kernel of $\Delta$) in the Hilbert space of $L^{2}$-sections of $\End E^{\rho}_{0}$ (see \cite{TZ07} for details). 
The family
\[
\mu^{\varepsilon\nu} = \mathrm{P}_{\varepsilon\nu}\left( \Ad f^{\varepsilon\nu} (\mu)\right),
\]
for sufficiently small $\vep$, determines the tangent vector field $\partial/\partial\varepsilon(\mu)$ to the complex curve $\left\{E^{\rho^{\varepsilon\nu}}\right\}\subset \cN$. Let 
\[
L_{\bar{\nu}}\mu  =\left.\frac{\partial }{\partial\bar{\varepsilon}}\right|_{\varepsilon=0} \Ad\left(f^{\varepsilon\nu}\right)^{-1}\left(\mu^{\varepsilon\nu}\right)
\]
be the corresponding Lie derivative. We have (see \cite{TZ07})
\begin{equation}\label{eq:Lie-derivative}
L_{\bar{\nu}}\mu = \bar{\partial} f_{\mu\bar{\nu}}.
\end{equation}

\subsection{Tautological line bundles over $\cN$}\label{taut}
For each marked point $z_{i}\in\CC\PP^{1}$ and $j=1,\dots,r$, let  $\ell_{ij}$ be the holomorphic line bundle over $\cN$ whose fiber over $\{E_{*}\}\in\cN$ is the complex line $E_{ij}/E_{ij+1}$.
The isomorphism $E_{*} \cong E^{\rho}_{*}$ identifies such fiber with the complex line $L_{ij}$, the eigenspace for the eigenvalue $e^{2\pi\sqrt{-1}\alpha_{ij}}$ of $\rho(S_{i})$, and the Hermitian metric on $L_{ij}$, induced by the standard Euclidean metric on $\CC^{r}$, determines a natural Hermitian metric $\|\cdot\|_{ij}$ on $\ell_{ij}$. The line bundles $\ell_{ij}$ over $\cN$ are called \emph{tautological line bundles}.

Let $\Omega_{ij} = c_{1}(\ell_{ij}, \|\cdot\|_{ij})$\footnote{In \cite{TZ07}, the normalization $c_{1}(\ell_{ij}, \|\cdot\|_{ij}) = \dfrac{2}{\pi}\Omega_{ij}$ is used instead.} be the first Chern form of the tautological line bundle $\ell_{ij}$ with respect to the Hermitian metric $\|\cdot\|_{ij}$.
It was proved in \cite[Lemma 4]{TZ07}) that
\begin{equation}\label{eq:cusp-Chern-form}
\Omega_{ij}\left(\frac{\partial}{\partial\varepsilon(\mu)}, \frac{\partial}{\partial\overline{\varepsilon(\nu)}}\right) = \frac{\sqrt{-1}}{2\pi}\tr\left(F^{i}_{\mu\overline{\nu}} v_{ij}\right),
\end{equation}
where $F^{i}_{\mu\overline{\nu}}\in\End\CC^{r}$ are the leading terms of the asymptotics of $f_{\mu\overline{\nu}}(\sigma_{i}\tau)$ at the cusps,
\begin{equation}\label{eq:asym-f-mu-nu}
f_{\mu\overline{\nu}}(\sigma_{i}\tau) = F^{i}_{\mu\overline{\nu}} + o(1)\quad \text{as}\quad \im\tau\to \infty,
\end{equation}
and
\begin{equation}\label{eq:u-v}
v_{ij} = u_{ij}\otimes \bar{u}_{ij} - I_{r}/r\in \End \CC^{r},\qquad j=1,\dots,r,
\end{equation}
where $\rho(S_{i})u_{ij} = e^{2\pi\sqrt{-1}\alpha_{ij}}u_{ij}$, $\|u_{ij}\| = 1$, and $\bar{u}_{ij}$ stands for the componentwise complex conjugate of $u_{ij}$. The matrices $v_{ij}$ span the subspace of traceless matrices in $\ker\left(\Ad\rho\left(S_{i}\right) - I_{r}\right)\subset \End\CC^{r}$, and the matrices $F^{i}_{\mu\bar\nu}$ are traceless and satisfy
$$\tr\left(F^{i}_{\mu\bar\nu}v\right) = 0\quad v\notin\ker\left(\Ad\rho\left(S_{i}\right) - I_{r}\right).$$ 
(See \cite{TZ07} for details and the relation of the $(1,1)$-forms $\Omega_{ij}$ on $\cN$ with the matrix-valued Eisenstein-Maass series.)

\section{The WZNW action} \label{WZ}

\subsection{Zero curvature equation and WZNW functional}
Given an open set $\cU\subseteq\CC$, let $E = \cU\times \CC^{r}$ be a holomorphically trivial vector bundle of rank $r$. A Hermitian metric on $E$ is then a smooth map $h:\cU\to\CMcal{H}_{r}$, where $\CMcal{H}_{r}$ is the homogeneous space of Hermitian and positive-definite $r\times r$ matrices. The zero-curvature equation for the corresponding Chern connection $d+h^{-1}\partial h$ has the form 
\begin{equation} \label{0-curv}
\bar{\del}(h^{-1}\del h)=0.
\end{equation}
Equation \eqref{0-curv} is the Euler-Lagrange equation for the Wess-Zumino-Novikov-Witten (WZNW) functional, introduced by Novikov \cite{N82} and Witten \cite{W84} in the case when the target space is a compact simply-connected Lie group $G$. Namely, for smooth maps $g:\CC\PP^{1}\to G$ consider the functional
$$S[g]=S_{0}[g]+W[g],$$
where $$S_{0}[g]=\frac{\sqrt{-1}}{2}\iint\limits_{\CC\PP^{1}}\tr\left(g^{-1}\del g \wedge g^{-1}\bar{\del} g\right),$$ and $$W[g]=\frac{1}{6\sqrt{-1}}\int_{B_g}\Theta,$$ where $B_g$ is any smooth 3-chain in $G$ with the boundary $g(\CC\PP^{1})$.  Here $\Theta$ is the invariant 3-form on $G$,
\[
\Theta=\text{tr}(\theta\wedge\theta\wedge\theta), 
\]
where $\theta=g^{-1}dg$ is the  Maurer-Cartan form.\footnote{ The form $\Theta$ is closed and $\frac{1}{48\pi^{2}}[\Theta]$ is a generator of $H^{3}(G,\ZZ)$.}
The difference between any two choices $B_{g}-B'_{g}$ is a 3-cycle in $G$, and since
\[
\int_{B_{g}-B'_{g}}\Theta\in48\pi^{2}\mathbb{Z},
\]
it follows that $S[g]$ is defined only modulo $8\pi^{2}\sqrt{-1}\,\ZZ$. Though the action functional $S[g]$ is not single-valued, its Euler-Lagrange equation is well-defined, and is given by the corresponding analog of equation \eqref{0-curv}. 

In physics, the action functional $S[g]$ determines the so-called WZNW model, a two-dimensional nonlinear sigma-model with target space $G$. $S_{0}[g]$ plays the role of kinetic energy and $W[g]$ is the celebrated Wess-Zumino topological term (see \cite{DF} for details). 
In turn, in the present situation, equation \eqref{0-curv} is the Euler-Lagrange equation for a non-compact WZNW model \cite{Gaw92}, where the target is instead the homogeneous space $\CMcal{H}_{r}\cong\mathrm{GL}(r,\CC)/\mathrm{U}(r)$ for the non-compact Lie group $\mathrm{GL}(r,\CC)$, and whose action functional will now be described in detail.

\subsection{Cholesky decomposition} \label{Chol}
The homogeneous space $\mathcal{H}_{r}$ has natural global coordinates given by the so-called \emph{Cholesky decomposition}. Namely, every $h\in\CMcal{H}_{r}$ admits a unique decomposition $h=b^{*}b$, where $b$ is upper triangular with positive diagonal elements. The coordinates of $\CMcal{H}_{r}$ are then given by the matrix elements of $b$. Equivalently, the Cholesky decomposition can be written as $h=c^* a c$, where $a$ is diagonal and positive, and $c$ is unipotent, in such a way that $b=\sqrt{a}c$. 

Explicitly, represent a matrix $h\in\mathcal{H}_r$ as $h=M^*M$ for some $M\in\GL(r,\CC)$ and denote by $M^{l_{1}\cdots l_{j}}_{l'_{1}\cdots l'_{j}}$ the $j\times j$ matrix obtained from $M$ by selecting $l_{1},\dots,l_{j}$ rows and  $l'_{1},\dots,l'_{j}$ columns. Then
\begin{equation}\label{Cholesky-a-c}
a_{j}=\frac{Q_{jj}}{Q_{j-1j-1}},\qquad c_{jk}=\frac{Q_{jk}}{Q_{jj}},
\end{equation}
where
\begin{equation} \label{Q}
Q_{jk}=\sum_{l_{1}<\cdots< l_{j}}\overline{\det\left(M^{l_{1}\cdots l_{j}}_{1\cdots j-1 j}\right)}\det\left(M_{1\cdots j-1 k}^{l_{1}\cdots l_{j}}\right),
\end{equation}
which is essentially the Gram-Schmidt orthogonalization process. 

Since $\mathcal{H}_r$ is contractible, the restriction of the $3$-form $\Theta$ to $\mathcal{H}_{r}$ (which we continue to denote by $\Theta$) is exact.
It turns out that its primitive can be written down explicitly in terms of the Cholesky decomposition.
\begin{lemma} \label{Theta} On $\mathcal{H}_r$ we have
\begin{equation}\label{Theta-0}
\Theta=3d\tr(\theta_{1}\wedge\theta_{1}^{*}),
\end{equation}
where $\theta_{1}=d b \,b^{-1}$.
\end{lemma}
\begin{proof} Since $\theta=h^{-1}dh = b^{-1}((b^{*})^{-1}db^{*} + db\,b^{-1})b$ we get
\begin{align*}
\Theta &=\tr(\theta_1\wedge\theta_1\wedge\theta_1+3\theta_1\wedge\theta_1\wedge\theta_1^* + 3\theta_1^{*}\wedge\theta_1^*\wedge\theta_1 +\theta_1^*\wedge\theta_1^*\wedge\theta_1^*)\\
&=3\tr(\theta_1\wedge\theta_1\wedge\theta_1^* + \theta_1^{*}\wedge\theta_1^*\wedge\theta_1).
\end{align*}
Here we have used that since $b=\sqrt{a}c$ with diagonal $a$ and unipotent $c$,
$$\tr(\theta_1\wedge\theta_1\wedge\theta_1)=\frac{1}{8}\tr(da\wedge da\wedge da)=0$$
and similarly, $\tr(\theta_1^*\wedge\theta_1^*\wedge\theta_1^*)=0$. Using the Maurer-Cartan equations $d\theta_1=\theta_1\wedge\theta_1$
and $d\theta_1^*=-\theta_1^*\wedge\theta_1^*$, we obtain \eqref{Theta-0}.
\end{proof}

\subsection{The WZNW functional for singular metrics} \label{WZ action}

Using a choice of Birkhoff--Grothendieck splitting $E \cong E_{N}$ of a stable parabolic bundle $E_{\ast}$ on $\CC\PP^{1}$,  one can think of a singular Hermitian metric $h_{E}$ adapted to a parabolic structure as a smooth map $h: X_{0} \to \CMcal{H}_{r}$ satisfying \eqref{0-curv} and having prescribed asymptotic behavior at $z_{1},\dots, z_{n}$. 

Namely, it follows from  \eqref{Y-1} that
\begin{equation} \label{as-1}
h_{E}(z,\bar{z})=
G_{i}(z)^{*}|z-z_{i}|^{2W_{i}}G_{i}(z),
\end{equation}
in a neighborhood of $z_{i}$, $i=1,\dots,n-1$, where $G_{i}(z)$ are holomorphic and such that $G_{i}(z_{i})=C_{i}(0)^{-1}$. When $\{E_{*}\}\in\cN_{0}$, the corresponding $h_{E}$ in given by \eqref{h}, where the map $Y$ is normalized as in Lemma \ref{special-Y}. Then it follows from formulas \eqref{eq:Y-1}--\eqref{eq:Y-2} that in a neighborhood of  $\infty$,
\begin{equation}\label{as-2}
h_{E}(z,\bar{z})=G_{n}(z)^{*}|z|^{-2W'_{n}}G_{n}(z), 
\end{equation}
where $G_{n}(z)$ is holomorphic at $\infty$, $G_{n}(\infty) = I_{r}$, and
\[ 
W'_{n}=\left\{
\begin{array}{cl}
W_{n}+N_{0} & \text{if} \quad r\mid d\\\\
\Ad(\Pi_{0})(W_{n})+N_{0} & \text{if} \quad r\centernot\mid d
\end{array}
\right.
\]
Consider the class of functions $\cL_{z_{i}}$, defined on a neighborhood of each $z_{i}$ by the  following uniformly convergent series
\[
f_{i0}(z)g_{i0}(\bar{z})+\sum_{l=1}^{\infty}f_{il}(z)g_{il}(\bar{z})|z-z_{i}|^{\lambda_{il}},
\]
where the functions $f_{il}$ are holomorphic, $g_{il}$ are antiholomorphic, and $\lambda_{il}>0$ for $i=1,\dots,n-1$ and $\lambda_{nl}<0$. Then the functions $Q_{jk}(z,\bar{z})$, $1\leq j<k\leq r$, induced from \eqref{Q}, have the following form in a neighborhood of each $z_{1},\dots,z_{n}$: 
\[
Q_{jk}(z,\bar{z})=|z-z_{i}|^{2\kappa_{ij}}p_{ijk}(z,\bar{z}),
\]
where $\kappa_{ij}=\sum_{l=1}^{j}\alpha_{il}$ for $i=1,\dots,n-1$, $\kappa_{nj}=-\sum_{l=1}^{j}\alpha'_{il}$, and $p_{ijk}\in\cL_{z_{i}}$.
It follows from \eqref{Cholesky-a-c} that the functions $a_{j}$ and $c_{jk}$ have the following asymptotics in a neighborhood of $z_{1},\dots,z_{n}$:  
\begin{equation}\label{as-a}
a_{j}(z,\bar{z})=
\begin{cases}
|z-z_{i}|^{2\alpha_{ij}}\displaystyle\frac{p_{ijj}(z,\bar{z})}{p_{ij-1j-1}(z,\bar{z})} & i=1,\dots,n-1,\\\\
|z|^{-2\alpha'_{nj}}\displaystyle\frac{p_{njj}(z,\bar{z})}{p_{nj-1j-1}(z,\bar{z})} & i=n,
\end{cases}
\end{equation}
and
\begin{equation}\label{as-c}
c_{jk}(z,\bar{z})=\frac{p_{ijk}(z,\bar{z})}{p_{ijj}(z,\bar{z})}.
\end{equation}

For any given $\{E_{*}\}\in\cN_{0}$, let us denote by $\cC(E_{*}; X_{0},\CMcal{H}_{r})$ the space of smooth maps $h:X_{0}\rightarrow \CMcal{H}_{r}$ with asymptotics \eqref{as-1}--\eqref{as-2} for some local smooth functions $G_{1},\dots, G_{n}$, such that
\[
[M_{i}]= [C_{i}(0)]\in\GL(r,\CC)/\mathrm{B}(r),\quad i=1,\dots,n-1,
\]
where $M_{i} = G_{i}(z_{i})^{-1}$, and $G_{n}(\infty)=I_{r}$. We will now define the WZNW functional $S:\cC(E_{\ast}; X_{0},\mathcal{H}_{r})\to\RR$, whose unique critical point will be the singular Hermitian metric $h_{E}$ on $E_{*}$. Namely, for $\delta>0$ let
\[
X_{\delta}=\mathbb{C}\setminus\left(\bigcup_{i=1}^{n-1}\{|z-z_{i}|<\delta\}\cup\left\{|z|>\frac{1}{\delta}\right\}\right)
\]
with boundary $\partial X_{\delta} = \bigcup_{i = 1}^{n} C_{i\delta}$. Each component $C_{i\delta}$ carries a naturally induced orientation. Define

\begin{align*}
S_{0\delta}[h] & =  \frac{\sqrt{-1}}{2}\iint\limits_{X_{\delta}}\tr\left(h^{-1}\partial h\wedge h^{-1}\bar{\partial} h\right)\\ 
& + \frac{\sqrt{-1}}{2}\sum_{i = 1}^{n - 1}\int\limits_{C_{i\delta}}\tr\left(W_{i}|z-z_{i}|^{-2W_{i}}M_{i}^{*}hM_{i}\right)\displaystyle\left(\frac{dz}{z - z_{i}} - \frac{d\bar{z}}{\bar{z} - \bar{z}_{i}}\right)\\
& +   \frac{\sqrt{-1}}{2}\int\limits_{C_{n\delta}}\tr\left(W'_{n}|z|^{2W'_{n}}h\right)\displaystyle\left(\frac{dz}{z} - \frac{d\bar{z}}{\bar{z}}\right)\\
\intertext{and}
W_{\delta}[h] & = - \frac{\sqrt{-1}}{2}\iint\limits_{X_{\delta}}\tr \left(h^{*}(\theta_{1})\wedge h^{*}(\theta_{1}^{*})\right),
\end{align*}
and put
\[
S_{\delta}[h] = S_{0\delta}[h] + W_{\delta}[h] + 2\pi\log\delta \sum_{j=1}^{r}\sum_{i=1}^{n-1}\alpha^{2}_{ij} +2\pi\log\delta \sum_{j=1}^{r}\alpha_{nj}^{\prime 2}.
\]
We have the following result.
\begin{proposition}\label{WZNW function}
For any $\{E_{*}^{\rho}\}\in\cN_{0}$ and any $h\in \cC(E_{*}; X_0,\CMcal{H}_{r})$, the limit
\begin{equation}\label{eq:functional-limit}
S[h] = \lim_{\delta\rightarrow 0} S_{\delta}[h]
\end{equation}
is well-defined. The Euler-Lagrange equation for the functional 
$$S:\cC(E_{*}; X_{0},\CMcal{H}_{r})\rightarrow \R$$
is the zero-curvature equation \eqref{0-curv}. 
\end{proposition}
\begin{proof}
We will only prove that the limit \label{eq:functional-limit} exists. The derivation of the Euler--Lagrange equation is standard and accounts for the introduction of the boundary terms in $S_{0\delta}[h]$. The details are left to the reader. 

From formulas \eqref{as-1}-\eqref{as-2} we get the following asymptotic formulas in the neighborhood of each $z_{i}$:
\[
\tr(h^{-1}h_{z} h^{-1}h_{\bar{z}}) =\left\{
\begin{array}{ll}
\displaystyle\frac{1}{|z-z_{i}|^{2}}\left(\tr\left(W^{2}_{i}\right)+O\left(|z-z_{i}|^{\kappa_{i}}\right)\right) & i=1,\dots,n-1\\\\
\displaystyle\frac{1}{|z|^{2}}\left(\tr\left(W'^{2}_{n}\right)+O\left(\left|z\right|^{\kappa_{n}}\right)\right) & i=n
\end{array}
\right.
\]
where 
\[ 
 \kappa_{i}=\left\{
 \begin{array}{ll}
 2(\alpha_{i1}-\alpha_{ir}+1)>0 & i=1,\dots,n-1\\\\
 2\max\{\alpha'_{nj}-\alpha'_{nk}-2\}<0 & i=n
 \end{array}
 \right.
 \]
which determine the regularization of the kinetic term in the WZNW action. It is straightforward to verify that the limits as $\delta\rightarrow 0$ of the boundary terms of $S_{0\delta}[h]$ exist.
On the other hand, the topological term needs no regularization, i.e. the integral 
$$ \frac{\sqrt{-1}}{2}\iint\limits_{\CC\PP^{1}}\tr\left(b_{\bar{z}}b^{-1}(b_{\bar{z}}b^{-1})^{*}-b_{z}b^{-1}(b_{z}b^{-1})^{*}\right)dz\wedge d\bar{z}$$
is already absolutely convergent. Indeed, using the Cholesky decomposition  $h=c^{*}ac$ we obtain the following Cartan decompositions
\begin{equation*}
b h^{-1}h_{z} b^{-1}=u+d+l\quad\text{and} \quad bh^{-1}h_{\bar{z}}b^{-1}=l^{*}+d^{*}+u^{*},
\end{equation*}
where $$d=a_{z}a^{-1},\qquad u=a^{1/2}c_{z}c^{-1}a^{-1/2},\qquad l=(a^{1/2}c_{\bar{z}}c^{-1}a^{-1/2})^{*}$$ are the corresponding diagonal, upper triangular and lower triangular components. 
hence
\[
\tr(h^{-1}h_{z}h^{-1}h_{\bar{z}})
=\|d+u+l\|^{2}=\|d\|^{2}+\|u\|^{2}+\| l\|^{2},\qquad \text{where}\quad\|g\|^{2}=\tr(gg^{*}).
\] 
From \eqref{as-a} we obtain the following asymptotics 
\[
d(z,\bar{z})=\left\{
\begin{array}{ll}
\displaystyle\frac{W_{i}}{z-z_{i}}+O(1) & \quad\text{as}\quad z\rightarrow z_{i},\quad i=1,\dots, n-1,\\\\
-\displaystyle\frac{W'_{n}}{z}+O\left(\left|z^{-2}\right|\right) & \quad\text{as}\quad z\rightarrow\infty.
\end{array}
\right.
\]
Therefore the integrals 
$$\frac{\sqrt{-1}}{2}\iint\limits_{X_{\delta}}\|d\|^{2}dz\wedge d\bar{z} \qquad \text{and}\qquad \frac{\sqrt{-1}}{2}\iint\limits_{X_{\delta}}\tr(h^{-1}h_{z}h^{-1}h_{\bar{z}})dz\wedge d\bar{z}$$
diverge identically, so that 
$$\frac{\sqrt{-1}}{2}\iint\limits_{\CC\PP^{1}}(\|u\|^{2}+\|l\|^{2})dz\wedge d\bar{z}=\lim_{\delta\rightarrow 0}\frac{\sqrt{-1}}{2}\iint\limits_{X_{\delta}}(\|u\|^{2}+\|l\|^{2})dz\wedge d\bar{z}<\infty.$$
Since $\tr\left(b_{\bar{z}}b^{-1}(b_{\bar{z}}b^{-1})^{*}-b_{z}b^{-1}(b_{z}b^{-1})^{*}\right)=\|l\|^{2}-\|u\|^{2},$ we conclude that the topological term in the WZNW action is absolutely convergent.
\end{proof}

\begin{remark}
The density of the WZNW functional described before corresponds to an explicit realization of the second Bott--Chern form on a trivial Hermitian vector bundle \cite{PT14,Tak16}. It was already noted in \cite{Don92} that, in the case of a trivial vector bundle, the WZNW functional provides a concrete realization of Donaldson's functional \cite{Don85,Don87}. As a consequence of the Birkhoff--Grothendieck theorem, such a realization becomes meaningful in the study of moduli spaces of parabolic bundles in genus 0.
\end{remark}

\section{Main results} \label{Main}
Let $\cS:\cN_{0}\rightarrow \RR$ be the function defined on each $\{E^{\rho}_{*}\}\in\cN_{0}$ as the critical value of the functional $S:\cC(E_{*}; X_{0},\CMcal{H}_{r})\rightarrow \R$ in Proposition
\ref{WZNW function}. 
\subsection{The first variation of the WZNW action}
The following result is an analog of Theorem 1 in \cite{ZT87a} for vector bundles.
\begin{theorem}\label{main-theo1}
The function $\cS:\cN_{0}\to\mathbb{R}$ is smooth and satisfies
\begin{equation}\label{gen-funct}
\partial \cS=-\vartheta,
\end{equation}
where $\vartheta$ is the $(1,0)$-form on $\cN_{0}$ defined in \eqref{Theta-form}.
In other words, for every  $\nu\in \overline{\mathfrak{S}_{2}(\Ga,\Ad\rho)}$, 
\[
\left.\frac{\partial \cS(h^{\varepsilon\nu})}{\partial\varepsilon}\right|_{\varepsilon=0}=-2\iint\limits_{\Gamma\bk\HH}\tr\left(\mathscr{A}\nu\right)\text{d}^{2}\tau.
\]
\end{theorem}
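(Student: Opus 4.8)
The plan is to differentiate the regularized functional of Proposition~\ref{WZNW function}, to use the equation of motion \eqref{0-curv} together with the variational formulas of Section~\ref{Var} to rewrite the first variation of the integrand as a total derivative, and to evaluate the resulting boundary integral on the cover $\HH$. First I would settle smoothness and reduce to differentiation under the integral sign: by Proposition~\ref{prop-def} the family $h^{\varepsilon\nu}$ is real-analytic in $\varepsilon$, and the asymptotics \eqref{as-1}--\eqref{as-2}, in which the weights $W_i$ and the Birkhoff--Grothendieck data are $\varepsilon$-independent, control the integrand uniformly near the punctures. The counterterms in Proposition~\ref{WZNW function} depend only on the fixed weights and on $m'_1,\dots,m'_r$, which are constant on the evenly-split locus $\cN_0$, so $\partial_\varepsilon$ annihilates them and $\partial_\varepsilon S|_0=\lim_{\delta\to0}\iint_{X_\delta}\partial_\varepsilon\mathcal{L}|_0\,d^2z$, where $\mathcal{L}$ is the WZNW density. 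It is convenient to pull everything back to $\HH$ via $J$, writing $\hat h(\tau)=(Y(\tau)Y(\tau)^*)^{-1}$; the kinetic density becomes $\tr(\mathscr{A}\mathscr{A}^*)$ because $j:=\hat h^{-1}\hat h_\tau=-Y'Y^{-1}=Y\mathscr{A}Y^{-1}$ and $k:=\hat h^{-1}\hat h_{\bar\tau}=Y\mathscr{A}^*Y^{-1}$, and \eqref{0-curv} becomes $j_{\bar\tau}=0$. Setting $V=\hat h^{-1}\partial_\varepsilon\hat h|_0$, Lemma~\ref{var-F} gives $V=-\dot F\circ J$ and, by \eqref{relation-F-nu}, $V_{\bar\tau}=-Y\nu Y^{-1}$.

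The core computation is the first variation of the full WZNW density. From $\partial_\varepsilon j=\partial_\tau V+[j,V]$ and $\partial_\varepsilon k=\partial_{\bar\tau}V+[k,V]$, cyclicity of the trace, and the identities $j_{\bar\tau}=0$, $k_\tau=-[j,k]$, one obtains for the kinetic part
$$\partial_\varepsilon\tr(jk)|_0=\partial_\tau\tr(Vk)+\partial_{\bar\tau}\tr(jV)+\tr(V[j,k]).$$
For the topological part, Lemma~\ref{Theta} and the variational identity $\delta\Theta=3\,d\,\tr(V\,\theta\wedge\theta)$ for $\theta=\hat h^{-1}d\hat h$ produce the density $-\tr(V[j,k])$, which cancels the commutator term exactly. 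Hence the first variation of the WZNW density is the total derivative
$$\partial_\varepsilon\mathcal{L}|_0=\partial_\tau\tr(Vk)+\partial_{\bar\tau}\tr(jV).$$
This is the matrix analog of the integration-by-parts step in \cite{ZT87a}, and it is precisely the role of the Wess--Zumino term to make the bulk of the first variation vanish on solutions of \eqref{0-curv}.

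Next I would rewrite the target as the same kind of boundary integral. Using $V_{\bar\tau}=-Y\nu Y^{-1}$ and $j_{\bar\tau}=0$ gives $\tr(\mathscr{A}\nu)=-\partial_{\bar\tau}\tr(jV)$, so that
$$\partial_\varepsilon\mathcal{L}|_0+2\,\tr(\mathscr{A}\nu)=\partial_\tau\tr(Vk)-\partial_{\bar\tau}\tr(jV).$$
Integrating over the truncated fundamental domain $D_\delta=J^{-1}(X_\delta)\cap D$, applying Stokes' theorem, and using that $\iint_{\Ga\bk\HH}\tr(\mathscr{A}\nu)\,d^2\tau$ converges (as $\nu$ is a cusp form), I get
$$\partial_\varepsilon S|_0+2\iint_{\Ga\bk\HH}\tr(\mathscr{A}\nu)\,d^2\tau=\lim_{\delta\to0}\frac{\sqrt{-1}}{2}\oint_{\partial D_\delta}\tr(V\theta).$$
Since $\hat h$ is built from a unitary representation it is genuinely $\Ga$-invariant for every $\varepsilon$; therefore $V$ and $\theta=\hat h^{-1}d\hat h$ descend to $X$, the $1$-form $\tr(V\theta)$ is $\Ga$-invariant, the contributions of the paired sides of $\partial D_\delta$ cancel, and only the horocyclic arcs around the cusps $\tau_1,\dots,\tau_n$ survive. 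The whole theorem thus reduces to the vanishing of these cusp contributions.

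The main obstacle is exactly this cusp analysis. Near a cusp $\tau_i$ I would insert the Cholesky-type asymptotics \eqref{as-1}--\eqref{as-2} and Lemma~\ref{asymptotics a-c}: because the singular factor $|z-z_i|^{2W_i}$ (respectively $|z|^{-2(W_n+N')}$ at $\infty$) is $\varepsilon$-independent, it cancels between $\hat h^{-1}$ and $\partial_\varepsilon\hat h$, so $V$ is governed by the regular factors $G_i$ and their variations, while the singular part of $\theta$ is the diagonal pole $W_i\,dw/w$. The regularity of $\dot F$ at the cusps (Proposition~\ref{prop-def}(iii)), the normalization $\dot F_-=0$ established in the proof of Lemma~\ref{var-F}, and the decay of $V_{\bar\tau}=-Y\nu Y^{-1}$ coming from $\nu$ being a cusp form should force each integral $\oint_{|z-z_i|=\delta}\tr(V\theta)$ to tend to $0$. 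Carrying this out uniformly in all the weight-exponents $\alpha_{ij}$ (and the mixed exponents $\alpha_{nj}+m_{r-j}$ at $\infty$) is the delicate point and is where the compatibility of the regularization in Proposition~\ref{WZNW function} with the variation is genuinely used; once it is done, $\partial S=-2\vartheta$ follows.
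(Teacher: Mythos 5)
Your overall strategy coincides with the paper's: split the functional into kinetic and topological parts, vary, cancel the commutator term between the two, convert the rest to a boundary integral by Stokes, identify the surviving bulk term with $-2\vartheta(\nu)$ via \eqref{relation-F-nu}, and kill the cusp contributions. Your bookkeeping on $\HH$ (with $j=Y\mathscr{A}Y^{-1}$, $k=Y\mathscr{A}^{*}Y^{-1}$, $V=-\dot F\circ J$) is a legitimate repackaging of the paper's computation on $X_\delta\subset\CC$, and the identification $\tr(\mathscr{A}\nu)=-\partial_{\bar\tau}\tr(jV)$ is correct.

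The genuine gap is in the variation of the topological term. You invoke the closed-surface identity $\delta\Theta=3\,d\,\tr(V\,\theta\wedge\theta)$ to assert that the first variation of the topological density is \emph{exactly} $-\tr(V[j,k])$. But the functional actually being differentiated is the explicit two-dimensional integral $W_\delta(h)=\iint_{X_\delta}\tr\bigl(b_{\bar z}b^{-1}(b_{\bar z}b^{-1})^{*}-b_{z}b^{-1}(b_{z}b^{-1})^{*}\bigr)d^{2}z$ coming from the antiderivative of Lemma \ref{Theta}, taken over a \emph{truncated} domain; a direct computation (as in the paper) shows its variation equals $-\iint_{X_\delta}\tr\bigl(\dot F[h^{-1}h_{\bar z},h^{-1}h_z]\bigr)d^{2}z$ \emph{plus} the boundary term $\tfrac{\sqrt{-1}}{2}\int_{\partial X_\delta}\tr\bigl((\dot b_-b^{-1})^{*}\theta_1-\dot b_+b^{-1}\theta_1^{*}\bigr)$. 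One cannot instead integrate the pointwise density $-\tr(V[j,k])$ over all of $\CC$ and forget the boundary, because $[h^{-1}h_z,h^{-1}h_{\bar z}]$ has non-integrable $|z-z_i|^{-2}$ singularities; the cancellation against the kinetic commutator only happens inside $X_\delta$. Consequently your reduction of the whole theorem to $\lim_{\delta\to0}\oint_{\partial D_\delta}\tr(V\theta)=0$ targets the wrong boundary integral: the correct limit involves the \emph{combination} of $-\tfrac{\sqrt{-1}}{2}\int_{\partial X_\delta}\tr(\dot F h^{-1}dh)$ with the $\theta_1$-term above, which the paper reorganizes (via $h^{-1}dh=b^{-1}(\theta_1+\theta_1^{*})b$ and $b=\sqrt{a}c$) into the diagonal piece $I_2=\tfrac{\sqrt{-1}}{2}\int\tr(a^{-1}\dot a_+a^{-1}da)$ and the unipotent piece $I_3=\tfrac{\sqrt{-1}}{2}\int\tr\bigl((\dot c_-c^{-1})^{*}a\,dc\,c^{-1}a^{-1}\bigr)$, each of which vanishes only after the detailed Cholesky asymptotics of Lemma \ref{asymptotics a-c} and the regular-locus inequalities $\alpha_{ij}-\alpha_{ik}>-1$, $\alpha_{nr-j}-\alpha_{nr-k}+m_j-m_k>-1$. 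Neither $\oint\tr(V\theta)$ nor the $\theta_1$-boundary term is shown (or obviously true) to vanish separately: $\theta_1$ carries the diagonal singularity $\tfrac12 W_i\,d\zeta_i/\zeta_i$, which makes individual contour integrals $O(1)$ rather than $o(1)$ before the cancellations are exploited. So even a complete execution of the cusp analysis you defer would not close the argument as you have set it up.
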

\begin{proof} 
It is sufficient to verify that 
$$\left.\lim_{\delta\rightarrow 0}\frac{\partial S_{\delta}(h^{\varepsilon\nu})}{\partial\varepsilon}\right|_{\varepsilon=0}=-\vartheta(\nu),$$
uniformly in a neighborhood of  every $\{E_{*}\}\in\cN_{0}$, which we show by a straightforward, though tedious, computation. Namely, using Lemma \ref{var-F} and Stokes theorem, we have
\begin{align*}
\left.\frac{\partial S_{0\delta}(h^{\varepsilon\nu})}{\partial \varepsilon}\right|_{\varepsilon=0}
 =  - & \frac{\sqrt{-1}}{2}\iint\limits_{X_{\delta}}\tr\left(\dot{F}\left(2\bar{\partial}\left(h^{-1}\partial h\right) + h^{-1}dh\wedge h^{-1}dh \right)\right)\\
 + &\frac{\sqrt{-1}}{2}\int\limits_{\partial X_{\delta}}\!\tr\left(\!\dot{F}\left(2 h^{-1}\partial h - h^{-1} d h\right)\right)\\
 -  &\frac{\sqrt{-1}}{2}\sum_{i = 1}^{n - 1} \int\limits_{C_{i\delta}}\tr\left(W_{i}|z-z_{i}|^{-2W_{i}}\dot{\phi_{i}}_{+}\right)\displaystyle\left(\frac{dz}{z - z_{i}} - \frac{d\bar{z}}{\bar{z} - \bar{z}_{i}}\right)\\
 + & \frac{\sqrt{-1}}{2}\int\limits_{C_{n\delta}}\tr\left(W'_{n}|z|^{2W'_{n}}\dot{h}_{+}\right)\displaystyle\left(\frac{dz}{z} - \frac{d\bar{z}}{\bar{z}}\right), 
\end{align*}
where 
\[
\dot{\phi_{i}}_{+} = \left.\frac{\partial \left(C_{i}(0)^{\varepsilon\nu}\right)^{*} h^{\varepsilon\nu} C_{i}(0)^{\varepsilon\nu}}{\partial \varepsilon} \right|_{\varepsilon =0},\quad i = 1,\dots, n-1.
\]
Next, put
$$\dot{b}_{+}=\left.\frac{\del b^{\varepsilon\nu}}{\del \varepsilon}\right|_{\varepsilon=0}\quad\text{and}\quad \dot{b}_{-}=\left.\frac{\del b^{\varepsilon\nu}}{\del \bar\varepsilon}\right|_{\varepsilon=0}.$$
Using the identity $\dot{F}=-b^{-1}\left(\dot{b}_{+}b^{-1}+\left(\dot{b}_{-}b^{-1}\right)^{*}\right)b$, we obtain
\begin{align*}
 \left.\frac{\partial W_{\delta}(h^{\varepsilon\nu})}{\partial\varepsilon}\right|_{\varepsilon=0} = &
 \frac{\sqrt{-1}}{2}\iint\limits_{X_{\delta}}\tr\left(\dot{F} h^{-1} dh\wedge h^{-1} dh\right)\\
+ & \frac{\sqrt{-1}}{2}\int\limits_{\partial X_{\delta}}\tr\left(\left(\dot{b}_{-}b^{-1}\right)^{*}\theta_{1}-\dot{b}_{+}b^{-1}\theta_{1}^{*}\right)
\end{align*}

Using that $\bar{\partial}(h^{-1}\partial h) \equiv 0$, the identity $h^{-1}dh=b^{-1}(\theta_{1}+\theta_{1}^{\ast})b$, the relation $b=\sqrt{a}c$, and putting everything together, 
we obtain
\begin{align*}
\left.\frac{\partial S_{\delta}(h^{\vep\nu})}{\partial\varepsilon}\right|_{\varepsilon=0} = 
& \quad\frac{\sqrt{-1}}{2}\int\limits_{\partial X_{\delta}}\text{tr}\left(\dot{F}(2h^{-1}\partial h - h^{-1}dh)\right)\\
& +\frac{\sqrt{-1}}{2}\int\limits_{\partial X_{\delta}}\text{tr}\left(\left(\dot{b}_{-}b^{-1}\right)^{*}\theta_{1}-\dot{b}_{+}b^{-1}\theta_{1}^{*}\right)\\
& - \frac{\sqrt{-1}}{2}\sum_{i = 1}^{n - 1} \int\limits_{C_{i\delta}}\tr\left(W_{i}|z-z_{i}|^{-2W_{i}}\dot{\phi_{i}}_{+}\right)\displaystyle\left(\frac{dz}{z - z_{i}} - \frac{d\bar{z}}{\bar{z} - \bar{z}_{i}}\right)\\
& +  \frac{\sqrt{-1}}{2}\int\limits_{C_{n\delta}}\tr\left(W'_{n}|z|^{2W'_{n}}\dot{\phi_{n}}_{+}\right)\displaystyle\left(\frac{dz}{z} - \frac{d\bar{z}}{\bar{z}}\right)\\
 = & \quad \left\{\sqrt{-1}\int\limits_{\partial X_{\delta}}\text{tr}\left(\dot{F} h^{-1}\partial h\right) \right.\\
& - \frac{\sqrt{-1}}{2}\sum_{i = 1}^{n - 1} \int\limits_{C_{i\delta}}\tr\left(W_{i}|z-z_{i}|^{-2W_{i}}\dot{\phi_{i}}_{+}\right)\displaystyle\left(\frac{dz}{z - z_{i}} - \frac{d\bar{z}}{\bar{z} - \bar{z}_{i}}\right)\\
& \left. +  \frac{\sqrt{-1}}{2}\int\limits_{C_{n\delta}}\tr\left(W'_{n}|z|^{2W'_{n}}\dot{\phi_{n}}_{+}\right)\displaystyle\left(\frac{dz}{z} - \frac{d\bar{z}}{\bar{z}}\right)\right\}\\
& + \frac{\sqrt{-1}}{2}\int\limits_{\partial X_{\delta}}\tr\left(a^{-1}\dot{a}_{+}a^{-1}da\right) + \sqrt{-1}\int\limits_{\partial X_{\delta}}\tr\left(\left(\dot{c}_{-}c^{-1}\right)^{*}a dc c^{-1}a^{-1}\right)\\
= & \quad I_{1}+I_{2}+I_{3},
\end{align*}
where 
\[
\dot{a}_{+}=\left.\frac{\del a^{\varepsilon\nu}}{\del \varepsilon}\right|_{\varepsilon=0}\qquad \dot{c}_{-}=\left.\frac{\del c^{\varepsilon\nu}}{\del \bar\varepsilon}\right|_{\varepsilon=0}.
\]
Here we denoted by $I_{1}$ the sum of integrals inside the curly brackets, and by $I_{2}$ and $I_{3}$ the two integrals in the last line.
We will consider each of these integrals separately. Stokes theorem, a change of variables and \eqref{relation-F-nu}  give 
\[
\sqrt{-1}\int\limits_{\partial X_{\delta}}\text{tr}\left(\dot{F} h^{-1}\partial h\right) = \sqrt{-1}\iint\limits_{X_{\delta}}\text{tr}\left(\bar{\partial}\dot{F}\wedge h^{-1}\partial h\right) = -2\iint\limits_{D_{\delta}}\text{tr}\left(\mathscr{A}\nu\right)d^{2}\tau
\]
where $D_{\delta}=J^{-1}(X_{\delta})\cap D$. Moreover, since 
\[
\phi^{i}_{+} = \left\{
\begin{array}{cr}
o(1), & i = 1,\dots, n - 1,\\\\
o\left(|z|^{-1}\right), & i = n,
\end{array}
\right.
\]
it follows from the cusp form condition that $$I_{1}= -\vartheta(\nu) + o(1)\qquad \text{as} \qquad\delta\to 0.$$
In order to estimate the integral $I_{2}$, we observe that
\[
\tr\left(\dot{a}_{+} a^{-1} da a^{-1}\right) = \sum_{j=1}^{r}\dot{\left(\log a_{j}\right)}_{+}d \log a_{j},
\]
and it follows from \eqref{as-a} that each $\dot{\left(\log a_{j}\right)}_{+}$ is regular on $\CC\PP^{1}$. Hence the residue contributions from each summand of $\tr\left(\dot{a}_{+} a^{-1} da a^{-1}\right)$ cancel out individually. Hence $I_{2}=o(1)$ as $\delta\to 0$.

Next, we estimate the integral $I_{3}$. 
It follows from \eqref{as-a}--\eqref{as-c} that near $z_{1},\dots,z_{n-1}$,
\[
\tr\left((\dot{c}_{-}c^{-1})^{*}adcc^{-1}a^{-1}\right)
=\displaystyle\sum_{j<k}\left(\phi_{ijk}dz+\psi_{ijk}d\bar{z}\right)|z-z_{i}|^{2(\alpha_{ij}-\alpha_{ik})},
\]
where $\phi_{ijk},\psi_{ijk}\in\cL_{z_{i}}$, while near $\infty$,
\[
\tr\left((\dot{c}_{-}c^{-1})^{*}adcc^{-1}a^{-1}\right)
=\displaystyle\sum_{j<k}\left(\phi_{njk}dz+\psi_{njk}d\bar{z}\right)|z|^{-2(\alpha'_{nj}-\alpha'_{nk})},
\]
where $\phi_{njk},\psi_{njk}\in\cL_{z_{n}}$. It is readily verified that if $\phi\in\cL_{z_{i}}$, $i=1,\dots,n$, then 
\[
\int_{\partial\cU_{i}}\phi dz=|z-z_{i}|^2f(|z-z_{i}|),\qquad \int_{\partial\cU_{i}}\phi d\bar{z}=|z-z_{i}|^2g(|z-z_{i}|),
\]
with regular functions $f,g$. Consequently,  since for $j<k$, $\alpha_{ij}-\alpha_{ik}>-1$, and $\alpha'_{nj}-\alpha'_{nk}>-1$ as $\{E^{\rho}_{*}\}\in\cN_{0}$,  we conclude that $I_{3}=o(1)$ as $\delta\to 0$. Therefore,
\[
\left.\frac{\partial S_{\delta}}{\partial\varepsilon}\right|_{\varepsilon=0} = -\vartheta(\nu)+o(1)\quad\text{as}\quad \delta\to 0.
\]

Finally, notice that the function $Y_{\varepsilon\nu}$ is continuously differentiable in $\varepsilon$ and its Fourier coefficients have the same property (in particular this also holds for the constant terms). Since every single factor in the integrals $I_{1}$, $I_{2}$, and $I_{3}$ explicitly depends on the derivatives of $Y_{\varepsilon\nu}$ and $F^{\varepsilon\nu}$ at $\varepsilon=0$, we conclude that each of the corresponding remainders can be estimated uniformly in a neighborhood of $\{E_{*}\}$ in $\cN_{0}$.
\end{proof}

\subsection{The second variation of the WZNW action}
The next result is an analog of Theorem 1 in \cite{ZT89} for parabolic bundles.
\begin{theorem}\label{main-theo2}
The $(1,0)$-form $\vartheta$ on $\cN_{0}$ satisfies 
\begin{equation}\label{d-bar-section}
\bar{\partial}\vartheta=2\sqrt{-1}\left.\left(\Omega -\Omega_{\mathrm{T}}\right)\right|_{\cN_{0}},
\end{equation}
where 
$$\Omega_{\mathrm{T}}=\sum_{i = 1}^{n}\sum_{j=1}^{r}\beta_{ij}\Omega_{ij}$$
and
\begin{equation}\label{eq:beta1}
\beta_{ij}  = 2\pi^{2}\alpha_{ij}, \qquad i = 1,\dots, n-1,
\end{equation}
\begin{equation}\label{eq:beta2}
\beta_{nj}  = 
\begin{cases}
2\pi^{2}\left(\alpha_{nj}+ m_{j}\right) & \quad\text{if}\quad r\mid d\\\\
2\pi^{2}\left(\alpha_{nj}+ m_{r-j+1}\right) & \quad\text{if}\quad r\centernot\mid d 
\end{cases}
\end{equation}
\end{theorem}

\begin{proof} 
As in the proof of Theorem 1 in \cite{ZT89}, consider the family
\[
\mu^{\varepsilon\nu} = \mathrm{P}_{\varepsilon\nu}\left( \Ad f^{\varepsilon\nu} (\mu)\right),
\]
which for sufficiently small $\vep$ determines the tangent vector field $\partial/\partial\varepsilon(\mu)$ to the complex curve $\left\{E^{\rho^{\varepsilon\nu}}\right\}\subset \cN$. Let $\cA_{\vep\nu}$ be the corresponding family of weight $2$ regular forms associated with the $1$-form $\vartheta$, where $\cA_{0}=\cA$.
We have
\begin{align*}
\vartheta\left(\frac{\partial}{\partial\varepsilon(\mu)}\right) &=
\lim_{\delta\to 0}2\iint\limits_{D_{\delta}}\text{tr}\left(\mathscr{A}_{\varepsilon\nu}\mu^{\vep\nu}\right)d^{2}\tau \\
& =\lim_{\delta\to 0}2\iint\limits_{D_{\delta}}\text{tr}\left(\Ad (f^{\vep\nu})^{-1}(\mathscr{A}_{\varepsilon\nu})
\Ad (f^{\vep\nu})^{-1}(\mu^{\vep\nu})\right)d^{2}\tau
\end{align*}
and
\begin{equation}\label{derivative}
\left.\frac{\partial }{\partial\bar{\varepsilon}}\right|_{\varepsilon=0} \vartheta\left(\frac{\partial}{\partial\varepsilon(\mu)}\right)=\lim_{\delta\to 0}2\iint\limits_{D_{\delta}}\text{tr}\left(L_{\bar{\nu}}\cA\mu +\cA L_{\bar{\nu}}\mu \right)d^{2}\tau,
\end{equation}
where
$$L_{\bar{\nu}}\cA=\left.\frac{\partial }{\partial\bar{\varepsilon}}\right|_{\varepsilon=0} \Ad\left(f^{\varepsilon\nu}\right)^{-1}\left(A_{\vep\nu}\right).$$
It follows from \eqref{com-diagram} that 
\[
\left(f^{\varepsilon\nu}\right)^{-1}\cA_{\varepsilon\nu} f^{\varepsilon\nu}
+Y^{-1}\left(F^{\varepsilon\nu}\right)^{-1}
\left(F^{\varepsilon\nu}_{\tau}\circ J\right)Y
=\left(f^{\varepsilon\nu}\right)^{-1}f^{\varepsilon\nu}_{\tau}+\cA
\]
which, in virtue of variational formulas \eqref{var-formulas-2} and Lemma \ref{var-F}, implies that
\begin{equation}
\label{var-A-}
L_{\bar{\nu}}\cA 
=-\nu^{*}.
\end{equation}
Using formulas \eqref{eq:Lie-derivative} and \eqref{var-A-}, we obtain
\begin{align*}
\left.\frac{\partial }{\partial\bar{\varepsilon}}\right|_{\varepsilon=0} \vartheta\left(\frac{\partial}{\partial\varepsilon(\mu)}\right)
& = \lim_{\delta\to 0}\left(-2\iint\limits_{D_{\delta}}\text{tr}\left(\mu\nu^{*}\right)d^{2}\tau + 2\iint\limits_{D_{\delta}}\text{tr}\left(\cA  \frac{\partial f_{\mu\bar{\nu}}}{\partial \bar{\tau}}\right)d^{2}\tau\right)\\
&=-\langle \mu,\nu\rangle - \lim_{\delta\to 0} \sqrt{-1}\int\limits_{\partial D_{\delta}}\text{tr}\left(\cA  f_{\mu\bar{\nu}}\right)d\tau.
\end{align*}
It follows from the asymptotic formula \eqref{F-Fourier} and formula  \eqref{eq:asym-f-mu-nu} that
\[
\int\limits_{\partial D_{\delta}}\text{tr}\left(\cA  f_{\mu\bar{\nu}}\right)d\tau = \sum_{i = 1}^{n}\tr\left(\Ad(U_{i})(B_{i}(0))F^{i}_{\mu\bar{\nu}}\right) + o(1).
 \]
From the explicit form \eqref{eq:regular-Fourier-1}--\eqref{eq:regular-Fourier-2} of the matrices $B_{i}(0)$ in Lemma \ref{regular-auto} and formula \eqref{eq:u-v} we get
\[
\Ad(U_{i})\left(B_{i}(0)\right) = 2\pi\sqrt{-1}\left(\frac{\tr(W_{i})}{r}I_{r} +\displaystyle\sum_{j=1}^{r}\alpha_{ij}v_{ij}\right),\quad  i=1,\dots,n-1,
\]
and
\[
\Ad(U_{n})\left(B_{n}(0)\right) = \left\{
\begin{array}{ll}
2\pi\sqrt{-1}\left(\displaystyle\frac{\tr(W_{n} + N_{0})}{r}I_{r} + \sum_{j=1}^{r}(\alpha_{nj}+m_{j})v_{ij}\right) &  \quad\text{if}\quad r\mid d,\\\\
2\pi\sqrt{-1}\left(\displaystyle M_{0} + \sum_{j=1}^{r}(\alpha_{nj}+m_{r-j+1})v_{nj}\right) &  \quad\text{if}\quad r\centernot\mid d,
\end{array}
\right.
\]
where
\[
M_{0} = \dfrac{\tr(W_{n}+N'_{0})}{r}I_{r} -\Ad(U_{n})(L-I_{r}).
\]
Hence, considering the trace properties of $F^{i}_{\mu\bar{\nu}}$ described in section \ref{taut}, we obtain
\[
\tr\left(\Ad(U_{i})(B_{i}(0))F^{i}_{\mu\bar{\nu}}\right) = \frac{\sqrt{-1}}{\pi}\sum_{j=1}^{r}\beta_{ij}\tr\left(F^{i}_{\mu\bar{\nu}} v_{ij}\right),
\]
where $\beta_{ij}$ are given by \eqref{eq:beta1}--\eqref{eq:beta2}.
Finally, using \eqref{eq:cusp-Chern-form} we conclude that
\begin{gather*}
\delb\vartheta\left(\frac{\partial}{\partial\varepsilon(\mu)},\frac{\partial}{\partial\overline{\varepsilon(\nu)}}\right)=\left.\frac{\partial }{\partial\bar{\varepsilon}}\right|_{\varepsilon=0} \vartheta\left(\frac{\partial}{\partial\varepsilon(\mu)}\right)\\
=  2\sqrt{-1}\left(\Omega\left(\frac{\partial}{\partial\varepsilon(\mu)},\frac{\partial}{\partial\overline{\varepsilon(\nu)}}\right)
-\Omega_{\mathrm{T}}\left(\frac{\partial}{\partial\varepsilon(\mu)},\frac{\partial}{\partial\overline{\varepsilon(\nu)}}\right)\right).\qedhere
\end{gather*}
\end{proof}

\subsection{Applications}

\noindent Combining Theorems \ref{main-theo1} and \ref{main-theo2}, we obtain 
\begin{corollary}\label{cor:potential}
The real-valued function $-\cS:\cN_{0}\rightarrow \RR$ is a K\"ahler potential for $\left.\left(\Omega -\Omega_{\mathrm{T}}\right)\right|_{\cN_{0}}$, the restriction of the difference between the natural K\"{a}hler form $\Omega$ and $\Omega_{\mathrm{T}}$ on $\cN$ to the regular locus $\cN_{0}$, i.e.,
\begin{equation}\label{eq:potential}
\partial\bar{\partial} \cS = 2\sqrt{-1}\left.\left(\Omega - \Omega_{\mathrm{T}}\right)\right|_{\cN_{0}}.
\end{equation}
In particular, on $\cN_{0}$ we have the following identity of de Rham cohomology classes 
\[
\left[\Omega\right] = \left[\Omega_{\mathrm{T}}\right].
\]
Furthermore, the result holds globally on any open chamber of parabolic weights for which $\cN_{0} = \cN$. 
\end{corollary}

\begin{remark}
The computation of the first Chern form for moduli spaces of stable parabolic bundles determined in \cite[Corollary 1]{TZ07} gives
\[
c_{1}\left(\lambda,\|\cdot\|_{Q}\right) = -\frac{r}{\pi^{2}}\Omega - \sum_{i = 1}^{n}\sum_{\substack{j,k=1\\ j\neq k}}^{r}\textrm{sgn}(\alpha_{ij}-\alpha_{ik}) (1- 2|\alpha_{ij}-\alpha_{ik}|)\Omega_{ij}
\]
(the formula applies in full generality, since the determinant of a vector bundle on $\CC\PP^{1}$ is always fixed in the moduli problem). In the simplest case $r=2$, we have the additional relation $\Omega_{i2} = -\Omega_{i1}$. In genus 0, the previous identity can be compared with \eqref{eq:potential}, yielding the following relation for the first Chern form of $\det T^{*}\cN$ on any open weight chamber for which $\cN_{0} = \cN$: 
\begin{equation}
c_{1}\left(\det T^{*}\cN\right) =
\begin{cases}
-\displaystyle 2\sum_{i = 1}^{n} c_{1}\left(\ell_{i2}\right) & \text{if \emph{d} is even},\\\\
-\displaystyle 2\sum_{i = 1}^{n - 1} c_{1}\left(\ell_{i2}\right) + 2c_{1}\left(\ell_{n2}\right) & \text{if \emph{d} is odd}.
\end{cases}
\end{equation}
\end{remark}

\begin{remark}
Corollary \ref{cor:potential} reduces the computation of symplectic volumes for the Narasimhan--Atiyah--Bott K\"ahler form to the intersection theory of tautological forms  whenever $\cN_{0} = \cN$ (cf. \cite{TZ07}). These volumes were first computed by Witten \cite{W91} for the group $\mathrm{SU}(2)$ in terms of the Verlinde formula (note that the piecewise-polynomial volume dependence on the parabolic weights is concealed in the explicit form of Witten's computation). It follows that a sensible construction of explicit geometric models for moduli spaces of parabolic bundles on the sphere, as well as the algebraic geometry of their tautological classes, would serve to reduce the problem of computation of symplectic volumes to a combinatorial one, while making the behavior of the latter under wall-crossing explicit. The implementation of this idea is under investigation by the first author, and will appear separately.
\end{remark}

\bibliographystyle{amsalpha}
\bibliography{bundles}
 \end{document}